\newtheorem{theorem}{Theorem}[section]
\newtheorem{lemma}[theorem]{Lemma}
\newtheorem{proposition}[theorem]{Proposition}
\newtheorem{question}[theorem]{Question}
\theoremstyle{definition}
\newtheorem{example}[theorem]{Example}
\newtheorem{definition}[theorem]{Definition}
\theoremstyle{remark}
\newtheorem{remark}[theorem]{Remark}
\begin{document}

\title[Extending periodic automorphisms of surfaces to 3-manifolds]
{Extending periodic automorphisms of surfaces to 3-manifolds}

\author{Yi Ni}
\address{Department of Mathematics, California Institute
of Technology, Pasadena 91125, USA}
\email{yini@caltech.edu}

\author{Chao Wang}
\address{School of Mathematical Sciences \& Shanghai Key Laboratory of PMMP, East China Normal University, Shanghai 200241, CHINA}
\email{chao\_{}wang\_{}1987@126.com}

\author{Shicheng Wang}
\address{School of Mathematical Sciences, Peking University, Beijing 100871, CHINA}
\email{wangsc@math.pku.edu.cn}

\subjclass[2010]{Primary 57M60; Secondary 57M12; 57N10}

\keywords{periodic automorphism, extendable action, degree one map}

\thanks{We thank Francis Bonahon, Jianfeng Lin, Yimu Zhang and Bruno Zimmermann for useful communications. The first author is supported by NSF grant
numbers DMS-1252992 and DMS-1811900.
The second author is supported by Science and Technology Commission of Shanghai Municipality (STCSM), grant No. 18dz2271000.
The third author is supported by NSF of China grant No. 11771021.}

\begin{abstract}
Let $G$ be a finite group acting on a connected compact surface $\Sigma$, and $M$ be an integer homology 3-sphere. We show that if each element of $G$ is extendable over $M$ with respect to a fixed embedding $\Sigma\rightarrow M$, then $G$ is extendable over some $M'$ which is 1-dominated by $M$. From this result, in the orientable category we classify all periodic automorphisms of closed surfaces that are extendable over the 3-sphere. The corresponding embedded surface of such an automorphism can always be a Heegaard surface.
\end{abstract}

\date{}
\maketitle

\section{Introduction}
We work in the smooth category, mainly consider oriented manifolds, and use $Aut(\cdot)$ to denote the orientation-preserving automorphism group of a manifold.

Let $\Sigma$ be a compact oriented surface and $M$ be an oriented 3-manifold, where $\Sigma$ and $M$ are possibly disconnected. An element $f$ in $Aut(\Sigma)$ is {\it extendable} over $M$ with respect to an embedding $e:\Sigma\rightarrow M$ if there exists an element $f'$ in $Aut(M)$ such that $f'\circ e=e\circ f$. A subgroup $G$ in $Aut(\Sigma)$ is {\it extendable} over $M$ with respect to an embedding $e:\Sigma\rightarrow M$ if there exists a group monomorphism $\phi:G\rightarrow Aut(M)$ such that $\phi(h)\circ e=e\circ h$ for any $h\in G$. If $\Sigma\subset M$, then the embedding is the inclusion map when we talk about ``extendable''.

Let $\Sigma_g$ be a closed connected oriented surface of genus $g$. We are interested in the following question, where the most interesting and basic case is when $M$ is the 3-sphere $S^3$.

\begin{question}\label{qes:main}
How to classify the finite subgroup in $Aut(\Sigma_g)$ such that each of its element is extendable over $M$ with respect to a fixed embedding $\Sigma_g\rightarrow M$?
\end{question}

For closed connected oriented 3-manifolds $M$ and $M'$, $M'$ is {\it 1-dominated} by $M$, denoted by $M\succeq_1 M'$, if there exists a degree one map $M\rightarrow M'$.

For integer homology 3-spheres, we have the following result.

\begin{theorem}\label{thm:general case}
Given an integer homology 3-sphere $M$ and a finite subgroup $G$ in $Aut(\Sigma_g)$, if there is an embedding $e:\Sigma_g\rightarrow M$ such that each element of $G$ is extendable over $M$ with respect to $e$, then there exists an integer homology 3-sphere $M'$ such that $M\succeq_1 M'$ and $G$ is extendable over $M'$.
\end{theorem}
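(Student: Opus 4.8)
The plan is to cut $M$ open along $e(\Sigma_g)$, replace each of the two sides by a compact $3$-manifold carrying a genuine $G$-action extending the given action on $\Sigma_g$, and reglue; the degree-one map is then produced by collapsing each old side onto its replacement rel boundary, and $M'$ turns out to be an integer homology sphere for free. To begin, since $M$ is an integer homology sphere the connected surface $e(\Sigma_g)$ is separating, so $M=W_1\cup_{e(\Sigma_g)}W_2$ with $\partial W_1=\partial W_2=e(\Sigma_g)$. Fix for each $h\in G$ an extension $\widehat h\in\mathrm{Aut}(M)$; it either preserves the unordered pair $\{W_1,W_2\}$ or interchanges it, and the $h$ admitting a side-preserving extension form a subgroup $G^{+}\le G$ of index at most $2$. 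I would treat $G=G^{+}$ first and afterwards incorporate an order-two symmetry exchanging the two sides (a side-swapping extension yields an identification $W_1\cong W_2$ intertwining the two $G^{+}$-actions, along which the construction below can be transported). So assume every $\widehat h$ preserves $W_1$ and $W_2$; then $\widehat h_i:=\widehat h|_{W_i}\in\mathrm{Aut}(W_i)$ restricts to $h$ on $\partial W_i$, but the $\widehat h_i$ need not form a $G$-action, since $\widehat h_i\widehat k_i(\widehat{hk}_i)^{-1}$ is an automorphism of $W_i$ fixing $\partial W_i$ pointwise.

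The key assertion is: \textbf{(Claim)} for $i=1,2$ there are a compact oriented $3$-manifold $V_i$ with an identification $\partial V_i=\Sigma_g$, a faithful orientation-preserving $G$-action on $V_i$ restricting on $\partial V_i$ to the given action on $\Sigma_g$, and a degree-one map $c_i\colon(W_i,\partial W_i)\to(V_i,\partial V_i)$ equal to the identity on the boundary. Granting this, set $M':=V_1\cup_{\Sigma_g}V_2$ and let $e'\colon\Sigma_g\hookrightarrow M'$ be the common boundary. The two $G$-actions agree on $\Sigma_g$, hence glue to a monomorphism $\phi\colon G\to\mathrm{Aut}(M')$ with $\phi(h)\circ e'=e'\circ h$, so $G$ is extendable over $M'$. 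The maps $c_1,c_2$ agree on $\Sigma_g$, hence glue to a degree-one map $M\to M'$, so $M\succeq_1 M'$. Finally a degree-one map out of an integer homology sphere is surjective on integral homology, so $H_1(M';\mathbb Z)=0$, and Poincar\'e duality then forces $M'$ itself to be an integer homology sphere. For $[G:G^{+}]=2$ one runs the same construction $G$-equivariantly, taking the data for $V_2$ to be the image of that for $V_1$ under the identification $W_1\cong W_2$ above and extending $\phi$ over $G$.

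It remains to prove the Claim, and this is the main obstacle. Producing $V_i$ requires $\Sigma_g$ to bound a $G$-manifold, which is false for a general $G$-surface; thus the extendability hypothesis must be genuinely exploited, and the freedom to pass to an $M'$ merely $1$-dominated by $M$ (rather than $M$ itself) is essential. The route I would take: (i) use the extensions $\widehat h_i$ together with geometrization — realization of finite-order mapping classes of compact $3$-manifolds by honest finite-order diffeomorphisms — to replace $W_i$ by a rel-boundary $1$-dominated manifold on which the relevant extensions form an honest finite group of diffeomorphisms; the one obstruction to such realization, reducibility of $M$, is removed beforehand by replacing $M$ with the sub-connected-sum of the prime summands that $G$ permutes among themselves, which still $1$-dominates $M$; (ii) pass to quotients: the resulting action is encoded by a compact $3$-orbifold $\mathcal Q_i$ with $\partial\mathcal Q_i=\Sigma_g/G$ over which the orbifold covering $\Sigma_g\to\Sigma_g/G$ extends, and one takes $V_i$ to be the associated cover of $\mathcal Q_i$ and $c_i$ the collapse rel boundary. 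The delicate point throughout is simultaneity: one must rigidify all of $G$ at once — not merely each cyclic subgroup separately — into a genuine action on a genuine manifold (not just an orbifold) with boundary exactly $\Sigma_g$, all while retaining the rel-boundary degree-one map onto it.
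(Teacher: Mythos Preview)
Your overall architecture --- cut along $e(\Sigma_g)$, replace each side by a manifold carrying a genuine $G$-action together with a rel-boundary degree-one collapse, glue --- is exactly the paper's strategy, and your deduction that $M'$ is an integer homology sphere from the existence of a degree-one map $M\to M'$ is correct. One minor point: the discussion of $G^{+}$ is unnecessary. Since both $h\in\mathrm{Aut}(\Sigma_g)$ and $\widehat h\in\mathrm{Aut}(M)$ are orientation-preserving and $e(\Sigma_g)$ is two-sided, every $\widehat h$ preserves the normal direction of $e(\Sigma_g)$ and hence each $W_i$; so $G^{+}=G$ automatically (the paper records this as ``$G$ preserves the two sides of $e(\Sigma_g)$'').

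The substantial gap is in the Claim, which is the entire content of the theorem. Your sketch for it does not work as stated. The appeal to ``geometrization --- realization of finite-order mapping classes by finite-order diffeomorphisms'' is not applicable: your input is a finite group acting on $\partial W_i$ with each element extending \emph{individually} over $W_i$, not a finite subgroup of the mapping class group of $W_i$ awaiting Nielsen realization. There is no off-the-shelf realization theorem for this data, and your assertion that reducibility is ``the one obstruction'' is false: the figure-eight knot complement $W$ is irreducible, every rotation of its torus boundary extends over $W$, yet cyclic groups of sufficiently large order do not extend (this is exactly Example~\ref{exa:figure eight}). Your step~(ii), passing to a quotient orbifold, is a reformulation of the goal rather than a method: the existence of such a quotient with manifold total space and with a rel-boundary degree-one map from $W_i$ is precisely what must be established.

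What the paper actually does to prove your Claim is a layered use of canonical decompositions: (1)~pass to the irreducible piece of $W_i$ containing $\partial W_i$ via uniqueness of the sphere system; (2)~extend $G$ over the characteristic compression body $V\subset W_i$ using Bonahon's extension lemma (Lemma~\ref{lem:extend comp}); (3)~on each component $Y$ of $\overline{W_i\setminus V}$, use uniqueness of the torus/annulus JSJ decomposition to make every $\widehat h$ preserve it, then extend $G_Y$ over the $I$-bundle and hyperbolic pieces adjacent to $\partial Y$ via Lemma~\ref{lem:extend bundle} and Mostow rigidity (Lemma~\ref{lem:extend hyp}); (4)~replace the remaining pieces --- now bounded by tori disjoint from $\partial W_i$ --- by solid tori along the rationally null-homologous slope. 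Step~(4) is exactly the mechanism producing the rel-boundary degree-one map and explains why one cannot in general keep $M'=M$. Your proposal correctly identifies the ``simultaneity'' difficulty but does not supply this machinery.
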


Note that on the set of integer homology 3-spheres $\succeq_1$ is a partial order relation (see Proposition \ref{pro:degree one}), and for a given $M$ there are only finitely many $M'$ satisfying $M\succeq_1 M'$ (see \cite{BRW} and \cite{Liu}). As an example, when $M$ is the Poincar\'e homology sphere $S^3_P$, Question \ref{qes:main} reduces to classifying the finite subgroup that is extendable over $S^3_P$ or $S^3$. Especially, when $M$ is $S^3$, we have the following result.

\begin{theorem}\label{thm:sphere case}
A finite subgroup $G$ in $Aut(\Sigma_g)$ is extendable over $S^3$ if and only if there is an embedding $e:\Sigma_g\rightarrow S^3$ such that each element of $G$ is extendable over $S^3$ with respect to $e$.
\end{theorem}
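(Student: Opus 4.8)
The plan is to obtain Theorem~\ref{thm:sphere case} as an immediate corollary of Theorem~\ref{thm:general case}. The forward implication is trivial from the definitions: if $G$ is extendable over $S^3$ with respect to an embedding $e$, witnessed by a monomorphism $\phi\co G\to Aut(S^3)$, then each $\phi(h)$ with $h\in G$ is an automorphism of $S^3$ extending $h$, so every element of $G$ is extendable over $S^3$ with respect to the very same $e$.

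For the substantial direction, assume $e\co\Sigma_g\to S^3$ is an embedding over which every element of the finite group $G$ is extendable. Since $S^3$ is an integer homology $3$-sphere, Theorem~\ref{thm:general case} yields an integer homology $3$-sphere $M'$ with $S^3\succeq_1 M'$ such that $G$ is extendable over $M'$. The point is now to recognize $M'$. A degree one map $S^3\to M'$ of closed oriented $3$-manifolds induces a surjection $\pi_1(S^3)\twoheadrightarrow\pi_1(M')$, so $\pi_1(M')=1$, and by the Poincar\'e conjecture $M'$ is diffeomorphic to $S^3$; equivalently, $S^3$ is the unique integer homology $3$-sphere that it $1$-dominates, consistent with the finiteness statements in \cite{BRW,Liu}. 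Composing the extension of $G$ over $M'$ with a diffeomorphism $M'\to S^3$ gives an embedding $e'\co\Sigma_g\to S^3$ together with a monomorphism $G\to Aut(S^3)$ realizing the extension, i.e.\ $G$ is extendable over $S^3$ (with respect to $e'$, which need not coincide with the original $e$).

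Granting Theorem~\ref{thm:general case}, there is no real obstacle remaining here; the only external ingredient is the Poincar\'e conjecture, invoked solely to identify the dominated manifold $M'$ with $S^3$. A self-contained treatment would instead have to confront Theorem~\ref{thm:general case} directly, whose difficulty --- producing the auxiliary homology sphere $M'$ and the equivariant structure on it from the hypothesis that each element extends --- is precisely what makes the general statement interesting; specializing to $M=S^3$ does not obviously shorten that argument, so routing through Theorem~\ref{thm:general case} is the natural strategy.
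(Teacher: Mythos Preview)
Your proposal is correct and is exactly the argument the paper intends: Theorem~\ref{thm:sphere case} is presented in the introduction as the special case $M=S^3$ of Theorem~\ref{thm:general case}, using that the only integer homology sphere $M'$ with $S^3\succeq_1 M'$ is $S^3$ itself (by $\pi_1$-surjectivity of degree one maps and the Poincar\'e conjecture, as you say). The paper does not spell out a separate proof beyond this remark, so your write-up simply makes the implicit deduction explicit.
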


Because every element in $Aut(S^3)$ is isotopic to the identity (see \cite{Ha}) and every finite subgroup in $Aut(S^3)$ can be conjugated into $SO(4)$ (see \cite{Pe}), Theorem \ref{thm:sphere case} means that if a finite subgroup in $Aut(\Sigma_g)$ can be realized by topological motions of $S^3$ (or $\mathbb{R}^3$), then it can be realized by isometric motions of $S^3$. Note that finite subgroups in $Aut(\Sigma_g)$ that are extendable over $S^3$ with order bigger than $4(g-1)$ can be classified (see \cite{WWZZ2}). The following theorem classifies the finite cyclic subgroups in $Aut(\Sigma_g)$ that are extendable over $S^3$ (see also Remark \ref{rem:equivalent}).

Given a finite subgroup $G$ in $Aut(\Sigma_g)$, we can obtain an orbifold $\Sigma_g/G$. Suppose that it has underlying space $\Sigma_r$ and $s$ singular points of indices $n_1,\cdots,n_s$. Then the fundamental group $\pi_1(\Sigma_g/G)$ has a presentation (see Figure \ref{fig:presentation} in Lemma \ref{lem:orb group})
\[\langle \alpha_1,\beta_1,\cdots,\alpha_r,\beta_r,\gamma_1,\cdots,\gamma_s\mid \prod^r_{i=1}[\alpha_i,\beta_i]\prod^{s}_{j=1}\gamma_j=1,\gamma_k^{n_k}=1,1\leq k\leq s\rangle.\]
The $G$-action on $\Sigma_g$ gives an epimorphism $\psi:\pi_1(\Sigma_g/G)\rightarrow G$ which is injective on finite subgroups of $\pi_1(\Sigma_g/G)$. We call such $\psi$ a {\it finitely-injective} epimorphism.

\begin{theorem}\label{thm:cyclic classify}
A finite cyclic subgroup $G$ in $Aut(\Sigma_g)$ is extendable over $S^3$ if and only if the orbifold $\Sigma_g/G$ and the finitely-injective epimorphism $\psi$ satisfy:

(a) there exist co-prime positive integers $p,q$ such that $n_1,\cdots,n_s\in\{p,q\}$;

(b) if $n_i=n_j$ for some $i\neq j$, then either $\psi(\gamma_i)=\psi(\gamma_j)$ or $\psi(\gamma_i\gamma_j)=1$;

(c) $\gamma_1,\cdots,\gamma_s$ can be partitioned into pairs $\gamma_i,\gamma_j$ such that $\psi(\gamma_i\gamma_j)=1$.

Two such subgroups $G$ and $G'$ are conjugate in $Aut(\Sigma_g)$ if and only if they are isomorphic and $\Sigma_g/G$ and $\Sigma_g/{G'}$ are homeomorphic. Moreover, the corresponding embedded surface of such a subgroup $G$ can always be a Heegaard surface.
\end{theorem}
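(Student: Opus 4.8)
The plan is to reduce everything to invariant surfaces of \emph{linear} cyclic actions on $S^3$. By Theorem~\ref{thm:sphere case}, $G$ is extendable over $S^3$ if and only if each of its elements is extendable over $S^3$ with respect to a fixed embedding $e\colon\Sigma_g\to S^3$; since $G=\langle f\rangle$ is cyclic this amounts to $f$ being extendable with respect to $e$, and extendability of $G$ supplies a monomorphism $\phi\colon G\hookrightarrow Aut(S^3)$. Then $\phi(G)\cong\mathbb Z/n$ is a finite cyclic subgroup of $Aut(S^3)$, hence — by the fact recalled in the introduction — conjugate into $SO(4)$, and after normalization its generator acts on $S^3\subset\mathbb C^2$ by $(z_1,z_2)\mapsto(\zeta^{a}z_1,\zeta^{b}z_2)$, $\zeta=e^{2\pi i/n}$, $\gcd(a,b,n)=1$. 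The fixed axes $A=\{z_1=0\}$ and $B=\{z_2=0\}$ form a Hopf link: $A$ is pointwise fixed exactly by $\langle\zeta^{n/p}\rangle$, of order $p:=\gcd(b,n)$, which rotates the normal disk of $A$ by $2\pi a/p$ with $\gcd(a,p)=1$, and symmetrically $B$ has stabilizer of order $q:=\gcd(a,n)$; thus $\gcd(p,q)=\gcd(a,b,n)=1$. After an equivariant isotopy we may assume $e(\Sigma_g)$ is transverse to $A\cup B$, so $e(\Sigma_g)/\phi(G)$ realizes the orbifold $\Sigma_g/G$ inside the quotient orbifold $\widehat L:=S^3/\phi(G)$, a lens space whose singular locus is the image of a sublink of the Hopf link $A\cup B$, with indices among $\{p,q\}$.

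For necessity, the cone points of $\Sigma_g/G\subset\widehat L$ are precisely its intersections with the singular circles, so their indices lie in $\{p,q\}$ with $\gcd(p,q)=1$, giving (a). At a cone point $x_i$ on the index-$p$ circle, identifying the normal disk $D$ of the corresponding point of $A$ and computing the monodromy of $\partial\big(D/\langle\zeta^{n/p}\rangle\big)$ gives $\psi(\gamma_i)=\zeta^{\,\varepsilon_i(a^{-1}\bmod p)\,n/p}$, where $\varepsilon_i=\pm1$ is the sign of the crossing of $e(\Sigma_g)$ with $A$ at $x_i$ (and an analogous formula holds on the index-$q$ circle). Hence if $n_i=n_j\ge2$ then $x_i,x_j$ lie on the same circle — as $p\neq q$ — so $\psi(\gamma_i)$ and $\psi(\gamma_j)$ are equal (when $\varepsilon_i=\varepsilon_j$) or mutually inverse, which is (b). Finally $e(\Sigma_g)$ separates $S^3$, so $\Sigma_g/G$ separates $\widehat L$; traversing either singular circle, its intersections with $\Sigma_g/G$ alternate between the two complementary pieces, so there are evenly many and consecutive ones carry opposite crossing signs, whence each consecutive pair $\gamma_i,\gamma_j$ satisfies $\psi(\gamma_i\gamma_j)=1$, which is (c).

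For sufficiency, suppose $O=\Sigma_g/G=(\Sigma_r;n_1,\dots,n_s)$ and the finitely-injective $\psi$ satisfy (a)--(c). For each value $p\ge2$ occurring among the $n_i$, condition (b) forces every index-$p$ cone point to have monodromy in $\{g_p,g_p^{-1}\}$ for a single generator $g_p$ of the unique subgroup of order $p$ in $G$, and then (c) makes the number of such cone points even and, when $p\ge3$, balanced between $g_p$ and $g_p^{-1}$. Pick integers $a,b$ with $\gcd(a,n)=q$, $\gcd(b,n)=p$ — the at most two occurring indices, each taken to be $1$ if absent — for which the monodromy generators produced by the formula above are exactly $g_q$ and $g_p$; this is a system of congruences solvable because $\gcd(p,q)=1$ and the relevant exponents are units modulo $p$ and $q$. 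Let $G$ act linearly on $S^3$ in this way, with quotient orbifold $\widehat L$. It then suffices to embed $O$ in $\widehat L$, transverse to the two singular circles, matching the marked points with their indices and crossing signs and inducing on $\pi_1$ a map compatible with $\psi$; pulling back along the orbifold cover returns an equivariantly embedded surface in $S^3$, connected because $\psi$ is onto and of genus $g$ by Riemann--Hurwitz. To build such an embedding one starts from a Heegaard surface of $\widehat L$ transverse to the singular circles and alters it by finger moves and tubes along suitable arcs to create exactly the required intersection points with the required signs — feasible since the signed intersection number along each circle vanishes — while routing the resulting handles so as to realize $\psi$ on the generators $\alpha_i,\beta_i$; after stabilization if necessary the surface stays Heegaard, so its lift $\Sigma_g\subset S^3$ is a Heegaard surface, which is the last sentence of the theorem.

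For the conjugacy statement, two finite cyclic actions of $G$ on $\Sigma_g$ are conjugate in $Aut(\Sigma_g)$ exactly when the homeomorphism type of $\Sigma_g/G$ and the epimorphism $\psi$ — up to automorphisms of $G$ and permutations of equal-index cone points — agree, so it remains to observe that, for a given orbifold $\Sigma_g/G$, conditions (a)--(c) determine $\psi$ up to this equivalence: (b) and (c) fix the multiset of cone-point monodromies (balanced $g_p^{\pm1}$ for $p\ge3$, the unique order-two element for $p=2$), and surjectivity together with the relation $\prod_i[\alpha_i,\beta_i]\prod_j\gamma_j=1$ leaves only the automorphism-and-permutation freedom. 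I expect the principal difficulty to be the geometric realization in the sufficiency step — threading the singular Hopf link through a Heegaard surface of $\widehat L$ to meet it in exactly the prescribed points with prescribed signs while simultaneously prescribing the homomorphism on the handle generators and not destroying the Heegaard structure, and verifying that the resulting cover is connected. By contrast, necessity and the conjugacy bookkeeping should be routine once the linear normal form and the cone-point monodromy formula are in place.
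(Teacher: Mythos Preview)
Your necessity argument and the overall shape of the conjugacy bookkeeping match the paper's: linearize the $G$-action, read off the singular Hopf link with coprime indices $p,q$ in $S^3/\phi(G)$, and extract (a)--(c) from the separating embedding $\Sigma_g/G\hookrightarrow S^3/\phi(G)$.

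For sufficiency and the Heegaard claim you take a different route. The paper first proves a normalization lemma (Lemma~\ref{lem:orb group}): using slides of singular points, slides of handles, and Dehn twists on $\mathcal F$, any finitely-injective $\psi$ can be brought to a standard form with $\psi(\alpha_1)=h$, all other $\psi(\alpha_i),\psi(\beta_i)$ trivial, and the $\psi(\gamma_j)$ untouched; combined with Lemma~\ref{lem:cyc group} this pins down $\psi$ up to the stated equivalence. The paper then builds, for each admissible orbifold type, an explicit $G$-invariant Heegaard surface directly in $S^3$ (Example~\ref{exa:standard form}): the boundary of an equivariant neighborhood of a graph made of geodesic arcs from the Clifford torus to the two axes, with local handles added. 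You instead descend to $\widehat L=S^3/\phi(G)$, embed $O$ by doctoring a Heegaard surface of the underlying lens space with finger moves and tubes, and pull back. Both strategies are sound; the paper's buys an explicit picture in which the Heegaard property is visible by construction, while yours is more conceptual but leaves more to verify (that finger moves keep the surface Heegaard, and that lifts of Heegaard surfaces along orbifold covers are Heegaard --- true, since finite covers of handlebodies are handlebodies).

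The one place where you have asserted something the paper actually proves is the conjugacy step. Your claim that ``surjectivity together with the relation leaves only the automorphism-and-permutation freedom'' on $\psi$, and likewise the step ``routing the resulting handles so as to realize $\psi$ on the $\alpha_i,\beta_i$'' in your sufficiency sketch, are precisely the content of Lemma~\ref{lem:orb group} (together with Lemma~\ref{lem:cyc group} for the $\gamma_j$). This requires a genuine argument with explicit mapping-class moves on $\mathcal F$; without it, neither your conjugacy statement nor the claim that your downstairs embedding can be made to induce the given $\psi$ is justified.
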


Actually, if such a $G$ does not give free actions on $\Sigma_1$, then its conjugate class is determined by $\Sigma_g/G$, which can be enumerated by the Riemann-Hurwitz formula. We also has a standard form of the $G$-action (see Example \ref{exa:standard form}). As a comparison, there exist finite subgroups in $Aut(\Sigma_{21})$ and $Aut(\Sigma_{481})$ which are extendable over $S^3$, but the embedded surfaces cannot be Heegaard surfaces (see \cite{WWZZ2}). It is worth to mention that  if a homeomorphism of $\Sigma_2$ is extendable over $S^3$, then its corresponding embedded surface can always be a Heegaard surface \cite{FK}.

After giving some preparations in Section~\ref{sec:partial order}, we will prove a stronger version of Theorem~\ref{thm:general case} in Section~\ref{sec:extending} and generalize the result to general actions on compact manifolds. Then, in Section~\ref{sec:cyclic}, we will prove Theorem~\ref{thm:cyclic classify} and give some intuitive examples which can also be read directly after the introduction. As the end of the introduction, we mention some literatures related to Question~\ref{qes:main}:

1. There are many results about extending finite group actions on $\Sigma_g$ to some 3-manifold bounded by $\Sigma_g$. For example, finite cyclic group actions are analyzed in the pioneer work \cite{Bo1}, and finite abelian group actions are analyzed in \cite{RZ}.

2. A result similar to our Theorem \ref{thm:sphere case} has been obtained in \cite{Fl} for finite cyclic group actions on finite 3-connected graphs. In this direction, the recent results in \cite{FY} are close to the style of our Theorem \ref{thm:general case}.

3. To get an intuition about the symmetries on surfaces, a sequence of papers on embedding symmetries of $\Sigma$ into those of $S^3$
appeared recently, including \cite {WWZZ1}, \cite{WWZZ2} and \cite{GWWZ}. The first two devote to maximum order problems,
and the third one lists all extendable finite cyclic group actions on $\Sigma_2$. The present research is inspired by those papers.

\section{Relation ``$\succeq$'' and property ``$\Sigma_g$-splittable''}\label{sec:partial order}
In this section, we introduce two concepts that will be used in our main result Theorem \ref{thm:stronger}. Let $\mathcal{M}$ denote the set of closed connected oriented 3-manifolds. We first define the relation $\succeq$ on $\mathcal{M}$ and give some properties about this relation. Then we define $\Sigma_g$-splittable for manifolds in $\mathcal{M}$.

For $M$ in $\mathcal{M}$ and a compact connected 3-manifold $N$ embedded in $M$, let $\overline{M-N}$ denote the closure of $M-N$, and let $\partial N$ denote the common boundary of $N$ and $\overline{M-N}$. If $\partial N$ is a torus, then up to isotopy there exists a unique simple closed curve $c$ in $\partial N$ such that $[c]\neq 0$ in $H_1(\partial N,\mathbb{Q})$ but $[c]=0$ in $H_1(\overline{M-N},\mathbb{Q})$, by the following ``half-live half-die'' lemma. Hence, we can obtain a closed 3-manifold $M'$ from $N$ by filling a solid torus into $\partial N$, mapping the meridian to $c$.

\begin{lemma}\label{lem:h-l h-d}
For a compact orientable 3-manifold $X$, the dimension of the kernel of $H_1(\partial X,\mathbb{Q})\rightarrow H_1(X,\mathbb{Q})$ is half of the dimension of $H_1(\partial X,\mathbb{Q})$.
\end{lemma}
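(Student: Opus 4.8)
The plan is to reduce the assertion to the linear--algebra statement that, with rational coefficients, the kernel and the image of the map $i_*\colon H_1(\partial X;\mathbb{Q})\to H_1(X;\mathbb{Q})$ induced by the inclusion $i\colon\partial X\hookrightarrow X$ have the same dimension; since these two dimensions add up to $\dim H_1(\partial X;\mathbb{Q})$, each must then be half of it. I would work over $\mathbb{Q}$ throughout, so that all homology and cohomology groups in play are finite-dimensional $\mathbb{Q}$-vector spaces and cohomology is canonically the dual of homology.

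The first step is the long exact sequence of the pair $(X,\partial X)$,
\[\cdots\longrightarrow H_2(X;\mathbb{Q})\longrightarrow H_2(X,\partial X;\mathbb{Q})\stackrel{\partial}{\longrightarrow} H_1(\partial X;\mathbb{Q})\stackrel{i_*}{\longrightarrow} H_1(X;\mathbb{Q})\longrightarrow\cdots,\]
whose exactness at $H_1(\partial X;\mathbb{Q})$ identifies $\ker i_*$ with $\operatorname{im}\partial$. So it is enough to prove $\operatorname{rank}\partial=\operatorname{rank} i_*$.

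The second step is Poincar\'e--Lefschetz duality. Since $X$ is compact and orientable of dimension $3$, there are natural isomorphisms $H_2(X,\partial X;\mathbb{Q})\cong H^1(X;\mathbb{Q})$ and $H_1(\partial X;\mathbb{Q})\cong H^1(\partial X;\mathbb{Q})$ under which the connecting homomorphism $\partial$ corresponds, up to sign, to the restriction map $i^*\colon H^1(X;\mathbb{Q})\to H^1(\partial X;\mathbb{Q})$; this is the usual naturality square for the duality of a manifold with boundary. Hence $\operatorname{rank}\partial=\operatorname{rank} i^*$, and since over the field $\mathbb{Q}$ the map $i^*$ is the transpose of $i_*$ we get $\operatorname{rank} i^*=\operatorname{rank} i_*$, which would finish the argument.

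The one point requiring care --- and the step I would write out most explicitly --- is the compatibility of the pair's connecting homomorphism with Lefschetz duality in the correct degree, which is exactly where orientability of $X$ (and the resulting orientation on $\partial X$) enters. I would also record the more geometric reformulation, since it is what is used in the application: the intersection form of the closed oriented surface $\partial X$ is nondegenerate and skew-symmetric, and it vanishes on $\ker i_*$, because two rational $1$-cycles on $\partial X$ that are nullhomologous in $X$ bound $2$-chains whose intersection is a compact oriented $1$-manifold, whose signed boundary lies on $\partial X$ and computes their intersection number, hence is zero; thus $\ker i_*$ is an isotropic subspace, and by the rank equality above it is in fact Lagrangian, of dimension $\tfrac12\dim H_1(\partial X;\mathbb{Q})$. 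Specialized to the case in which $\partial X$ is a torus, this gives the uniqueness up to isotopy of the curve $c$ used just before the lemma.
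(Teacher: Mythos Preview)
The paper does not supply a proof of this lemma at all; it is quoted as the well-known ``half-lives half-dies'' fact and used without further comment. Your argument is the standard one and is correct: the long exact sequence of the pair identifies $\ker i_*$ with $\operatorname{im}\partial$, Poincar\'e--Lefschetz duality identifies $\partial$ with $i^*$, and over $\mathbb{Q}$ the rank of $i^*$ equals the rank of $i_*$, so $\dim\ker i_*=\dim\operatorname{im}\,i_*=\tfrac12\dim H_1(\partial X;\mathbb{Q})$. The additional geometric remark about the intersection form being isotropic on $\ker i_*$ is also correct and indeed underlies the application preceding the lemma.
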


\begin{definition}\label{def:partial order}
For $M$ and $M'$ in $\mathcal{M}$, if $M'$ can be obtained from $M$ as in the above construction, then define the relation $M\succeq_s M'$ (``s'' for ``surgery''). All the relations ``$\succeq_s$'' generate a reflexive and transitive relation on $\mathcal{M}$, denoted by ``$\succeq$''.
\end{definition}

Note that if $M'$ can be obtained from $M$ by pinching an embedded compacted 3-manifold whose boundary is a sphere or a torus to a ball or a solid torus, then $M\succeq_s M'$. We also have another relation $\succeq_H$ on $\mathcal{M}$, where $M\succeq_H M'$ if and only if there is an epimorphism $H_1(M,\mathbb{Q})\rightarrow H_1(M',\mathbb{Q})$. It is well known that $M\succeq_1 M'$ implies $M\succeq_H M'$.

\begin{proposition}\label{pro:order imply}
For $M$ and $M'$ in $\mathcal{M}$, $M\succeq M'$ implies $M\succeq_H M'$; if $M$ is an integer homology 3-sphere, then $M\succeq M'$ implies $M\succeq_1 M'$ and $M'$ is also an integer homology 3-sphere.
\end{proposition}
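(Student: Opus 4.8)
The plan is to reduce to a single generating move $M\succeq_s M'$ and to analyze that move by Mayer--Vietoris, upgrading the argument in the integer homology sphere case by producing an explicit degree-one map via a pinching construction. Since $\succeq$ is the reflexive--transitive closure of $\succeq_s$, and since both ``there is an epimorphism on $H_1(\,\cdot\,;\mathbb{Q})$'' and ``there is a degree-one map'' are reflexive and transitive relations (an identity map is degree one and a composite of degree-one maps is degree one; a composite of epimorphisms is an epimorphism), it suffices to treat one step $M\succeq_s M'$; for the second assertion one also checks that a single step takes integer homology spheres to integer homology spheres, and the general chain then follows by induction, with every intermediate manifold an integer homology sphere and the degree-one maps composed.

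For the first assertion, write $M=N\cup_T W$ with $T=\partial N=\partial W$ a torus and $W=\overline{M-N}$, and $M'=N\cup_T V$ with $V$ a solid torus whose meridian is glued to the slope $c\subset T$; recall $[c]\ne0$ in $H_1(T;\mathbb{Q})$ but $[c]=0$ in $H_1(W;\mathbb{Q})$. Let $i_{N*},i_{W*},i_{V*}$ denote the maps on $H_1(\,\cdot\,;\mathbb{Q})$ induced by the inclusions of $T$. By Mayer--Vietoris, $H_1(M;\mathbb{Q})=\operatorname{coker}(i_{N*},-i_{W*})$ and $H_1(M';\mathbb{Q})=\operatorname{coker}(i_{N*},-i_{V*})$ on $H_1(T;\mathbb{Q})$. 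Now Lemma~\ref{lem:h-l h-d} forces $\ker i_{W*}$ to be one-dimensional, hence equal to $\mathbb{Q}[c]$, which is also $\ker i_{V*}$ (the meridian of $V$ dies in $V$); so $i_{V*}$ factors through $i_{W*}$, and extending the resulting linear functional gives an epimorphism $\rho\colon H_1(W;\mathbb{Q})\to H_1(V;\mathbb{Q})$ with $\rho\circ i_{W*}=i_{V*}$. Then $\operatorname{id}\oplus\rho$ is an epimorphism taking $\operatorname{image}(i_{N*},-i_{W*})$ exactly onto $\operatorname{image}(i_{N*},-i_{V*})$, so it descends to an epimorphism $H_1(M;\mathbb{Q})\to H_1(M';\mathbb{Q})$, i.e.\ $M\succeq_H M'$. (When $\partial N$ is a sphere and $M'=N\cup B^3$, the same sequence just yields the projection $H_1(M;\mathbb{Q})=H_1(N;\mathbb{Q})\oplus H_1(W;\mathbb{Q})\twoheadrightarrow H_1(N;\mathbb{Q})=H_1(M';\mathbb{Q})$.)

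For the second assertion, suppose $M$ is an integer homology sphere. Mayer--Vietoris over $\mathbb{Z}$ (using $H_2(M)=H_1(M)=0$) gives $H_1(T;\mathbb{Z})\cong H_1(N;\mathbb{Z})\oplus H_1(W;\mathbb{Z})$, which together with Lemma~\ref{lem:h-l h-d} forces $H_1(N;\mathbb{Z})\cong H_1(W;\mathbb{Z})\cong\mathbb{Z}$ and $i_{N*},i_{W*}$ to be surjective. In particular $[c]$ is torsion in $H_1(W;\mathbb{Z})\cong\mathbb{Z}$, hence $[c]=0$ there, so $c$ bounds a connected properly embedded orientable surface $F$ in $W$. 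The heart of the argument is then a pinching construction: such a $W$ admits a proper degree-one map $\psi\colon(W,\partial W)\to(S^1\times D^2,\,S^1\times S^1)$ that is, on $\partial W$, a homeomorphism sending $c$ to the meridian and compatible with the gluing of $V$. Concretely one writes $\psi=(h,g)$, with $h\colon W\to S^1$ Poincar\'e--Lefschetz dual to $[F]$ (so $h^{-1}(\mathrm{pt})=F$ and $h|_{\partial W}$ is the ``longitudinal'' projection) and $g\colon(W,\partial W)\to(D^2,\partial D^2)$ a map whose boundary restriction wraps once around $\partial D^2$ along $c$; the surjectivity of $i_{W*}$ and the vanishing of $H^2(W;\mathbb{Z})$ make $h$ and $g$ exist, and computing $\psi^{-1}(\mathrm{pt})=F\cap g^{-1}(\mathrm{pt})$ (a count of intersections of a link with the surface $F$) shows $\deg\psi=\pm1$, which one normalizes to $1$ by the choice of orientations. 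Gluing $\psi$ to $\operatorname{id}_N$ produces a degree-one map $M\to M'$, so $M\succeq_1 M'$. (When $\partial N$ is a sphere one instead writes $M=N'\#W'$ with $N',W'$ integer homology spheres and uses the standard degree-one collapse $N'\#W'\to N'=M'$.) Finally, a degree-one map is surjective on integral homology, so $H_1(M';\mathbb{Z})$ is a quotient of $H_1(M;\mathbb{Z})=0$; by Poincar\'e duality $H_2(M';\mathbb{Z})=0$ as well, so $M'$ is an integer homology 3-sphere.

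I expect the main obstacle to be the pinching construction of $\psi\colon W\to S^1\times D^2$ with the prescribed boundary behavior, together with the verification that $\deg\psi=\pm1$ rather than $0$; the Mayer--Vietoris bookkeeping and the reduction to a single move are routine by comparison.
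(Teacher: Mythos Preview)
Your proof is correct and follows the paper's approach: reduce to a single $\succeq_s$ move, use Mayer--Vietoris for the rational statement, and in the integer homology sphere case pinch $W=\overline{M-N}$ to a solid torus along a Seifert surface for $c$ to obtain a degree-one map that is the identity on $\partial N$, then glue to $\mathrm{id}_N$. The paper is terser---it infers $M\succeq_H M'$ from a dimension count $\dim H_1(M;\mathbb{Q})-\dim H_1(M';\mathbb{Q})=\dim H_1(W;\mathbb{Q})-1\ge0$ rather than building your explicit $\rho$, and it reads off $\deg\psi=1$ directly from $\psi|_{\partial W}=\mathrm{id}$ (via the boundary isomorphism $H_3(W,\partial W)\cong H_2(\partial W)$ for a homology solid torus) rather than through your intersection count---and your parenthetical sphere cases are unnecessary since $\succeq_s$ is defined only via torus boundaries, but these are purely cosmetic differences.
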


\begin{proof}
Suppose that $M\succeq_s M'$, $N$ is the compact 3-manifold embedded in $M$, and $c$ is the simple closed curve in $\partial N$. By the MV-sequence of homology groups,
\begin{align*}
\dim H_1(M,\mathbb{Q})=&\dim H_1(N,\mathbb{Q})+\dim H_1(\overline{M-N},\mathbb{Q})\\
&-\dim Im(H_1(\partial N,\mathbb{Q})\rightarrow H_1(N,\mathbb{Q})\oplus H_1(\overline{M-N},\mathbb{Q})),
\end{align*}
where ``$Im(\cdot)$'' denotes the image of the map. There is a similar equality for $M'$ with $\overline{M-N}$ replaced by the solid torus which is filled into $\partial N$ along $c$.

Let $c'$ be a simple closed curve in $\partial N$ such that $[c]$ and $[c']$ generate $H_1(\partial N,\mathbb{Q})$. For $M$ and $M'$, since $[c]=0$ in $H_1(\overline{M-N};\mathbb Q)$ and $H_1(\overline{M'-N};\mathbb Q)$, the images of $[c]$ belong to $H_1(N,\mathbb{Q})$. By Lemma~\ref{lem:h-l h-d}, the images of $[c']$ are nonzero in the second factors, respectively. Hence, for $M$ and $M'$ the last summands are equal, and
\[\dim H_1(M,\mathbb{Q})-\dim H_1(M',\mathbb{Q})=\dim H_1(\overline{M-N},\mathbb{Q})-1\geq 0.\]

The general case when $M\succeq M'$ can be obtained by induction, so $M\succeq_H M'$.

If $M$ is an integer homology 3-sphere and $M\succeq_s M'$, then by the MV-sequence both $N$ and $\overline{M-N}$ are integer homology solid tori and $[c]=0$ in $H_1(\overline{M-N},\mathbb{Z})$. Then, $c$ bounds a surface in $\overline{M-N}$ and there is a degree one map from $\overline{M-N}$ to a solid torus, which is identity on $\partial N$ and maps the surface to a meridian disk. Combined with the identity on $N$, we get a degree one map from $M$ to $M'$. Then, $H_1(M',\mathbb{Z})=0$ and $M'$ is also an integer homology 3-sphere.

The general case when $M\succeq M'$ can be obtained by induction since on each step we have an integer homology 3-sphere.
\end{proof}

\begin{proposition}\label{pro:degree one}
On the set of integer homology 3-spheres the relation $\succeq_1$ is a partial order relation, hence $\succeq$ is also a partial order relation.
\end{proposition}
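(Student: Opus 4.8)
The plan is to verify directly that $\succeq_1$ is a partial order on the set of integer homology $3$-spheres; the statement for $\succeq$ then follows formally, since $\succeq$ is reflexive and transitive by Definition~\ref{def:partial order} and its antisymmetry reduces to that of $\succeq_1$: by Proposition~\ref{pro:order imply}, $M\succeq M'$ implies $M\succeq_1 M'$ for integer homology $3$-spheres, so $M\succeq M'\succeq M$ forces $M\succeq_1 M'\succeq_1 M$, hence $M=M'$. For $\succeq_1$, reflexivity is witnessed by the identity map (degree one), and transitivity holds because the degree of a composition of maps between closed oriented $3$-manifolds is the product of the degrees, so the composition of two degree-one maps again has degree one.

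The substance of the proposition is the antisymmetry of $\succeq_1$, so suppose $f\colon M\to M'$ and $g\colon M'\to M$ both have degree one. The first step is the classical observation that a degree-one map of closed oriented manifolds induces an epimorphism on fundamental groups: lifting $f$ to the cover of $M'$ associated with $\operatorname{im}f_*$ and comparing fundamental classes shows this cover must be trivial. Hence $f_*$ and $g_*$ are epimorphisms between $\pi_1(M)$ and $\pi_1(M')$, so $g_*\circ f_*$ is a surjective endomorphism of $\pi_1(M)$. Since fundamental groups of compact $3$-manifolds are finitely generated and residually finite (the residual finiteness following, via geometrization, from work of Hempel and others), they are Hopfian, so $g_*\circ f_*$ is an automorphism; this forces $f_*$, and symmetrically $g_*$, to be isomorphisms, so $\pi_1(M)\cong\pi_1(M')$.

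The second step is that this determines $M$ up to orientation-preserving homeomorphism. Decompose $M$ and $M'$ into prime summands; each summand is again an integer homology $3$-sphere (connected sum is additive on homology in degrees $1$ and $2$), hence is $S^3$, the Poincar\'e homology sphere $S^3_P$ (the only spherical space form that is a homology sphere, the binary icosahedral group being the unique nontrivial finite subgroup of $SO(4)$ that acts freely on $S^3$ and has trivial abelianization), or a closed aspherical $3$-manifold; the fundamental groups of the non-$S^3$ summands are nontrivial and freely indecomposable. By Grushko's theorem the isomorphism $\pi_1(M)\cong\pi_1(M')$ matches these summands in pairs with isomorphic fundamental groups, and geometrization (Waldhausen for the Haken pieces, Mostow rigidity for the hyperbolic pieces, the classification of the remaining geometries, and the Poincar\'e conjecture for the $S^3$ pieces) shows each paired summand $M_i$ is homeomorphic to $M'_i$. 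To fix orientations, note that $f$ is now a $\pi_1$-isomorphism between homology spheres, hence a homotopy equivalence, and since the numbers of prime summands agree it can be split along the spheres of the prime decomposition into degree-$+1$ maps $f_i\colon M_i\to M'_i$; if $M_i$ were chiral and $M'_i=-M_i$, then $f_i$ would be a self-map of $M_i$ of degree $-1$, which no chiral prime $3$-manifold admits (by rigidity in the hyperbolic case, and by positivity of self-mapping degrees for manifolds Seifert fibered over a hyperbolic base orbifold, such as $S^3_P$). Hence $M'_i\cong M_i$ as oriented manifolds for every $i$, and gluing gives an orientation-preserving homeomorphism $M\cong M'$, i.e.\ $M=M'$.

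I expect the main obstacle to lie in this last step --- recovering the oriented homeomorphism type of a possibly reducible integer homology $3$-sphere from its fundamental group --- which uses geometrization and the prime decomposition in an essential way, and for which the orientation bookkeeping is the delicate point: one must exclude the possibility that a (homeomorphically) chiral prime summand is matched with its mirror, and this is exactly where the hypothesis of degree precisely $+1$, rather than merely nonzero, is combined with the geometric rigidity of prime $3$-manifolds.
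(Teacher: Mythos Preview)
Your overall architecture matches the paper's proof closely: reduce to antisymmetry of $\succeq_1$, use residual finiteness and the Hopfian property to promote the two $\pi_1$-epimorphisms to an isomorphism, invoke Grushko and geometrization to match the prime summands homeomorphically, and then settle the orientations. The paper also singles out this last step as the delicate one.

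The gap is precisely where you anticipated it. Your sentence ``since the numbers of prime summands agree it can be split along the spheres of the prime decomposition into degree-$+1$ maps $f_i\colon M_i\to M'_i$'' is not justified. A degree-one map, even one that happens to be a $\pi_1$-isomorphism, need not respect the sphere systems of the two prime decompositions, and arranging this (say via Laudenbach--type results on homotopy of sphere systems) is a nontrivial extra argument that you have not supplied. Without it, you have no maps $f_i$ to which to apply your chirality argument. (Incidentally, the claim that $f$ is a homotopy equivalence also deserves a word: once $S^3_P$ summands are present, neither side is aspherical and $\pi_2$ is large, so ``$\pi_1$-iso $\Rightarrow$ homotopy equivalence'' is not automatic.)

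The paper sidesteps the splitting problem entirely. For each target summand $M'_{\sigma\cdot i}$ it composes $f$ with the pinching map $M'\to M'_{\sigma\cdot i}$; since $\pi_2(M'_{\sigma\cdot i})=0$, the decomposing spheres of $M$ can be collapsed, yielding maps $f_{ji}\colon M_j\to M'_{\sigma\cdot i}$ with $\sum_j\deg f_{ji}=1$. For $j\neq i$ the induced map on $\pi_1$ is trivial (by the Grushko matching), so if $M'_{\sigma\cdot i}$ is aspherical then $f_{ji}$ is null-homotopic and $\deg f_{ii}=1$; if $M'_{\sigma\cdot i}=S^3_P$ then each such $f_{ji}$ lifts through $S^3$, forcing $\deg f_{ii}\equiv 1\pmod{120}$, and one still obtains a degree-one map $M_i\to M'_{\sigma\cdot i}$. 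Finally the paper quotes Sun's theorem that a degree-$\pm1$ map between prime $3$-manifolds of this type is homotopic to a homeomorphism. This pinching-and-degree-count argument is what replaces your unproven splitting; if you want to keep your line of reasoning, you should either prove the splitting or switch to this device.
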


\begin{proof}
We need to show that $\succeq_1$ is antisymmetric on the set of integer homology 3-spheres. Suppose that $M$ and $M'$ are two integer homology 3-spheres such that $M\succeq_1 M'$ and $M'\succeq_1 M$. Since the fundamental groups of manifolds in $\mathcal{M}$ are residually finite (see \cite{He2} for Haken manifolds, the general case can be proved similarly based on Thurston's geometrization conjecture), they are hopfian groups. Then since degree one maps induce epimorphisms between fundamental groups, $\pi_1(M)$ and $\pi_1(M')$ are isomorphic.

Let $f:M\rightarrow M'$ denote the degree one map and $f_*:\pi_1(M)\rightarrow\pi_1(M')$ be the induced map. By the prime decomposition theorem of 3-manifolds, there exists a compact 3-manifold $N_0$ embedded in $M$ such that $\partial N_0$ consists of spheres, $\widehat{N}_0$ is homeomorphic to $S^3$, and for each component $N$ of $\overline{M-N_0}$ the 3-manifold $\widehat{N}$ is an irreducible integer homology 3-sphere. Suppose that $\overline{M-N_0}$ has $m$ components, denoted by $N_1,N_2,\cdots,N_m$. Let $M_i$ be $\widehat{N}_i$, where $0\leq i\leq m$. Similarly for $M'$ we can have $N_j'$ and $M_j'$, where $0\leq j\leq m'$. By the positive solution of Poincar\'e conjecture (see \cite{Pe}), we can assume that neither $\pi_1(M_i)$ nor $\pi_1(M_j')$ is trivial for $i,j\geq 1$. We can further assume that the base points of $M$ and $M'$ lie in $N_0$ and $N_0'$ respectively, and $f$ preserves the base points. Then $\pi_1(M_i)$ and $\pi_1(M_j')$ can be embedded in $\pi_1(M)$ and $\pi_1(M')$ according to the decompositions given by $N_i$ and $N_j'$ respectively. By the Grushko decomposition theorem (see \cite{He1}), $m=m'$ and there exists a permutation $\sigma$ of $\{1,2,\cdots,m\}$ such that $f_*(\pi_1(M_i))$ and $\pi_1(M_{\sigma\cdot i}')$ are conjugate in $\pi_1(M')$ for each $1\leq i \leq m$.

By the positive solution of Thurston's geometrization conjecture and Borel conjecture in dimension three, an irreducible integer homology 3-sphere is determined by its fundamental group. Moreover, all these 3-manifolds are aspherical, except $S^3$ and the Poincar\'e homology sphere $S^3_P$. Hence $M_i$ and $M_{\sigma\cdot i}'$ are homeomorphic for each $1\leq i \leq m$. Note that $M_i$ and $M_{\sigma\cdot i}'$ have the induced orientations from $M$ and $M'$ respectively. We need to show that the homeomorphisms between $M_i$ and $M_{\sigma\cdot i}'$ can be chosen to preserve the orientations. Then as connected sums the oriented 3-manifolds $M$ and $M'$ are homeomorphic.

Fix $1\leq i\leq m$, there is a degree one map $f_i':M'\rightarrow M_{\sigma\cdot i}'$ obtained by pinching each $N_j'$ with $j\neq 0,\sigma\cdot i$ to a point. Since $\pi_2(M_{\sigma\cdot i}')$ is trivial, the degree one map $f_i'\circ f$ induces a family of maps $f_{ji}:M_j\rightarrow M_{\sigma\cdot i}'$ where $0\leq j\leq m$, and we have the equality $\sum_{j=0}^m\deg f_{ji}=1$, where ``$\deg$'' denotes ``degree''. If $M_{\sigma\cdot i}'$ is aspherical, then $f_{ji}$ with $j\neq i$ are all null-homotopic, and we have $\deg f_{ii}=1$. If $M_{\sigma\cdot i}'$ is $S^3_P$, then $120\mid \deg f_{ii}-1$, and there exists a degree one map from $M_i$ to $M_{\sigma\cdot i}'$. In each case, the degree one map is homotopic to a homeomorphism (see \cite{Su}).
\end{proof}

\begin{example}\label{exa:lens space}
Neither $\succeq_1$ nor $\succeq$ is antisymmetric on the set of lens spaces.

Let $N'$ be the product $S^1\times A$, where $A$ is an oriented annulus whose boundary has the induced orientation. Together with an orientation of $S^1$ we get orientations of $N'$ and $\partial N'$. Given integers $a_1$, $b_1$, $a_2$, $b_2$ such that all the greatest common divisors $(a_1,b_1)$, $(a_2,b_2)$, $(a_1,a_2)$ are equal to $1$, a closed 3-manifold $M$ can be obtained from $N'$ by filling solid tori into the boundary tori, mapping the meridians to the curves of slopes $b_1/a_1$ and $b_2/a_2$ respectively.

Choose a disk $D$ in the interior of $A$, whose boundary has the induced orientation from $D$. Let $N$ be the product $S^1\times D$. Then the curve of slope $(a_1b_2+a_2b_1)/(a_1a_2)$ in $\partial N$ bounds a surface in $\overline{M-N}$. A closed 3-manifold $M'$ can be obtained from $N$ by filling a solid torus into $\partial N$, mapping the meridian to this slope.

Note that $(a_1b_2+a_2b_1,a_1a_2)=1$, $M$ is the lens space $L(a_1b_2+a_2b_1,ma_2+nb_2)$ where $m$ and $n$ are integers satisfying the equality $ma_1-nb_1=1$, and $M'$ is the lens space $L(a_1b_2+a_2b_1,a_1a_2)$. The above constructions show that $M\succeq_1 M'$ and $M\succeq M'$. Below we will show that if integers $p$, $q_1$, $q_2$ satisfy $(p,q_1)=(p,q_2)=1$ and $q_1q_2\equiv r^2\pmod p$ for some integer $r$, then there exist $a_1$, $b_1$, $a_2$, $b_2$, $m$, $n$ such that $M$ is $L(p,q_1)$ and $M'$ is $L(p,q_2)$.

Since $(p,q_1)=(p,q_2)=1$, we have $(p,r)=1$ and there exists an integer $q_1^*$ such that $q_1q_1^*\equiv 1\pmod p$. Let $a_1=rq_1^*+p$, $a_2=r$, then $(a_1,a_2)=1$. Hence there exist integers $b_1'$ and $b_2'$ such that $a_1b_2'+a_2b_1'=1$. Let $b_1=b_1'p$, $b_2=b_2'p$, then we have $(a_1,b_1)=(a_2,b_2)=1$ and $a_1b_2+a_2b_1=p$. Clearly $a_1a_2\equiv r^2q_1^*\equiv q_2\pmod p$. Finally, since $(a_1,b_1)=1$, there exist integers $m$ and $n$ such that $ma_1-nb_1=1$, and we have $ma_2+nb_2\equiv mr\equiv ma_1q_1\equiv q_1\pmod p$.
\end{example}

\begin{definition}\label{def:splittable}
A 3-manifold $M$ is {\it $\Sigma_g$-splittable} if every embedded $\Sigma_g$ in $M$ separates $M$ into two parts.
\end{definition}

\begin{proposition}\label{pro:split order}
For $M$ in $\mathcal{M}$, $M$ is $\Sigma_g$-splittable for $g\leq k$ if $M$ is $\Sigma_k$-splittable; $M$ is $\Sigma_g$-splittable for any $g$ if and only if $M$ is a rational homology 3-sphere.
\end{proposition}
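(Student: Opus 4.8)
The plan is to reformulate both statements in terms of the class $[\Sigma]\in H_2(M;\mathbb{Z})$ of an embedded surface $\Sigma$. Two standard facts are needed first. (i) Since $M$ is oriented and the closed surface $\Sigma_g$ is orientable, $\Sigma_g$ is two-sided, and a connected two-sided surface separates $M$ into two parts if and only if $[\Sigma_g]=0$ in $H_2(M;\mathbb{Z})$: if it separates, it bounds one of the two parts and so is null-homologous; conversely, if it is null-homologous, then its algebraic intersection number with every loop vanishes, so no loop meets it transversely in a single point, which forces its complement to be disconnected. (ii) Since $M$ is a closed oriented 3-manifold, Poincar\'e duality gives $H_2(M;\mathbb{Z})\cong H^1(M;\mathbb{Z})$, which is torsion-free, so $H_2(M;\mathbb{Z})=0$ exactly when $M$ is a rational homology 3-sphere. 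Hence ``$M$ is $\Sigma_g$-splittable'' is equivalent to ``no embedded connected genus-$g$ surface represents a nonzero class of $H_2(M;\mathbb{Z})$''.

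I would prove the second assertion first. If $M$ is a rational homology 3-sphere, then $H_2(M;\mathbb{Z})=0$, so every embedded $\Sigma_g$ is null-homologous and hence separates; thus $M$ is $\Sigma_g$-splittable for every $g$. Conversely, suppose $M$ is not a rational homology 3-sphere, so $H_2(M;\mathbb{Z})\neq 0$. Pick a nonzero class $\alpha$; its Poincar\'e dual is represented by some smooth map $f\colon M\to S^1$, and the preimage $f^{-1}(p)$ of a regular value $p$ is an embedded orientable surface representing $\alpha$. Tubing its components together along arcs in $M$, with compatible orientations so that the homology class is unchanged, produces a connected embedded $\Sigma_h$ that is non-separating. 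Hence $M$ is not $\Sigma_h$-splittable, and in particular not $\Sigma_g$-splittable for all $g$.

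For the first assertion I would argue the contrapositive: assuming $M$ fails to be $\Sigma_g$-splittable for some $g\leq k$, I would produce a non-separating embedded $\Sigma_k$. Start from a non-separating connected embedded $\Sigma_g\subset M$ and stabilize it by a trivial handle: pick a ball $B\subset M$ meeting $\Sigma_g$ in a disk $D$ and replace $D$ by a properly embedded once-punctured torus $T\subset B$ with $\partial T=\partial D$, chosen so that $T\cup D$ bounds a genus-one handlebody in $B$. The resulting surface is a connected embedded $\Sigma_{g+1}$, and since it differs from $\Sigma_g$ only by the boundary of that handlebody, it is homologous to $\Sigma_g$ in $M$, hence still non-separating. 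Iterating this stabilization $k-g$ times yields a non-separating connected embedded $\Sigma_k$, so $M$ is not $\Sigma_k$-splittable. Combining the two assertions, the set of $g$ for which $M$ is $\Sigma_g$-splittable is either all of $\{0,1,2,\dots\}$ (exactly when $M$ is a rational homology 3-sphere) or an initial segment, possibly empty (as happens for $S^1\times S^2$).

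The bulk of the work is routine surface-in-3-manifold technology: representing a 2-dimensional homology class by an embedded orientable surface, tubing components together to obtain connectedness, and trivial-handle stabilization, all of which preserve the class in $H_2(M;\mathbb{Z})$. The one place I expect to need genuine care is fact (i), the exact equivalence for a connected two-sided surface between ``separates $M$ into two parts'' and ``null-homologous over $\mathbb{Z}$'', together with fact (ii), that $H_2$ of a closed oriented 3-manifold is torsion-free so that its vanishing is exactly the rational homology sphere condition. Once that dictionary is established, both assertions follow as sketched above.
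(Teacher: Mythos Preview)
Your proof is correct and follows essentially the same approach as the paper's: the first assertion is handled by stabilizing a non-separating $\Sigma_g$ with local trivial handles, and the second by the standard dictionary between nonzero classes in $H^1(M;\mathbb{Z})\cong H_2(M;\mathbb{Z})$, maps $M\to S^1$, and non-separating two-sided surfaces. The only cosmetic difference is that you phrase things via $H_2$ and Poincar\'e duality while the paper phrases them via $H_1$ and maps to $S^1$; you are also more explicit about obtaining a \emph{connected} non-separating surface (via tubing), a point the paper leaves implicit.
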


\begin{proof}
If an embedded $\Sigma_g$ does not separate $M$, then locally add 1-handles to $\Sigma_g$. Clearly $M$ is not $\Sigma_k$-splittable for $k\geq g$. $M$ is not a rational homology 3-sphere if and only if there is an epimorphism $H_1(M,\mathbb{Z})\rightarrow H_1(S^1,\mathbb{Z})$, which can always be induced by a map from $M$ to $S^1$. The latter condition is equivalent to that $M$ contains a closed two sided surface which does not separate $M$ (see \cite{He1}).
\end{proof}

\begin{proposition}\label{pro:sum}
A 3-manifold $M$ in $\mathcal{M}$ is $\Sigma_g$-splittable if and only if every prime factor of $M$ is $\Sigma_g$-splittable.
\end{proposition}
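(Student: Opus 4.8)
The plan is to reformulate $\Sigma_g$-splittability homologically and then push an embedded surface into the prime summands by innermost-disk surgeries along a connect-sum sphere system. First recall the standard fact that a connected two-sided closed surface $F$ in a closed orientable $3$-manifold $X$ separates $X$ if and only if $[F]=0$ in $H_2(X;\mathbb{Z}/2)$: if $X\setminus F$ is connected one joins the two sides of a bicollar of $F$ by an arc in the complement to get a loop meeting $F$ transversally once, which is dual to a nonzero class, while a separating $F$ bounds. Next write $M=P_1\#\cdots\#P_n$ with $P_i$ the prime factors, and let $\mathcal{S}=S_1\sqcup\cdots\sqcup S_m$ be a system of disjoint $2$-spheres realizing the decomposition, so that the components of $M\setminus\mathcal{S}$ are the punctured primes $P_i'$ (each $P_i$ with finitely many open balls removed), with $\widehat{P_i'}=P_i$. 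I will use two Mayer--Vietoris facts for connected sums: the inclusions induce an injection $H_2(P_i';\mathbb{Z}/2)\hookrightarrow H_2(M;\mathbb{Z}/2)$ and an isomorphism $H_2(P_i';\mathbb{Z}/2)\cong H_2(P_i;\mathbb{Z}/2)$, and altogether $H_2(M;\mathbb{Z}/2)\cong\bigoplus_i H_2(P_i;\mathbb{Z}/2)$, the projections being induced by the pinch maps $M\to P_i$.

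For the ``if'' direction, assume each $P_i$ is $\Sigma_g$-splittable and let $\Sigma$ be a connected embedded closed surface of genus $g$ in $M$, put transverse to $\mathcal{S}$. As long as $\Sigma\cap\mathcal{S}\ne\emptyset$, choose an intersection circle $c$ innermost on the sphere $S_j$ containing it, bounding a disk $E\subset S_j$ with $\mathrm{int}\,E\cap\Sigma=\emptyset$, and surger $\Sigma$ along $E$ (delete a thin annular neighbourhood of $c$ in $\Sigma$ and glue in two parallel push-offs of $E$ to the two sides of $S_j$). This keeps the surface embedded and closed, strictly decreases the number of intersection circles with $\mathcal{S}$, does not increase the genus of any component, and leaves the $\mathbb{Z}/2$-homology class unchanged, since the mod-$2$ sum of the old and new surfaces is the boundary of a ball neighbourhood of $E$. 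After finitely many such surgeries we reach a closed surface $\Sigma'=\bigsqcup_i\Sigma_i'$ with $\Sigma_i'\subset P_i'$, each component of genus at most $g$, and $[\Sigma']=[\Sigma]$ in $H_2(M;\mathbb{Z}/2)$. Each component $K$ of $\Sigma_i'$ is an embedded $\Sigma_{g'}$ in $P_i$ with $g'\le g$; by Proposition~\ref{pro:split order} the $\Sigma_g$-splittable manifold $P_i$ is also $\Sigma_{g'}$-splittable, so $K$ separates $P_i$ and $[K]=0$ in $H_2(P_i;\mathbb{Z}/2)$, hence in $H_2(P_i';\mathbb{Z}/2)$ and in $H_2(M;\mathbb{Z}/2)$. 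Summing over components, $[\Sigma]=[\Sigma']=0$ in $H_2(M;\mathbb{Z}/2)$, so the connected surface $\Sigma$ separates $M$.

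For the ``only if'' direction, suppose some prime factor $P_i$ is not $\Sigma_g$-splittable, witnessed by a non-separating embedded $\Sigma_g\subset P_i$. Form the connected sum so that the open ball removed from $P_i$ misses this surface; then $\Sigma_g\subset P_i'\subset M$. Pick a loop $\gamma$ in $P_i$ meeting $\Sigma_g$ transversally in one point; since removing a ball does not change $\pi_1$, $\gamma$ is homotopic in $P_i$ to a loop $\gamma'\subset P_i'$, and the mod-$2$ intersection number with the closed surface $\Sigma_g$ is a homotopy invariant, so $\gamma'$ meets $\Sigma_g$ in an odd number of points. Viewing $\gamma'$ and $\Sigma_g$ inside $M$ this number is unchanged, so $[\Sigma_g]\ne0$ in $H_2(M;\mathbb{Z}/2)$ and $\Sigma_g$ does not separate $M$; hence $M$ is not $\Sigma_g$-splittable.

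The step that requires genuine care is the innermost-disk surgery: one must verify that it terminates, that it cannot raise the genus of any component, and above all that it preserves the $\mathbb{Z}/2$-homology class of the surface, as this last point is exactly what makes the homological bookkeeping in both directions go through. The remaining ingredients are the routine Mayer--Vietoris computations for connected sums and Proposition~\ref{pro:split order}.
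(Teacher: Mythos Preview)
Your proof is correct and takes a genuinely different route from the paper's. The paper argues by induction on $g$: after reducing to the case where the embedded $\Sigma_g$ is incompressible (lower-genus compressions being handled by the inductive hypothesis together with Proposition~\ref{pro:split order}), an innermost disk $E$ on a decomposition sphere must, by incompressibility, be capped by a disk $D'\subset\Sigma_g$; the sphere $E\cup D'$ then either bounds a ball, permitting an isotopy that removes the intersection, or is essential, reducing the number of prime factors. Your approach bypasses incompressibility entirely by recasting the problem in $H_2(-;\mathbb{Z}/2)$: you surger the \emph{surface} along innermost sphere-disks, verify that each surgery preserves the mod-$2$ class and cannot raise component genus, and then invoke Proposition~\ref{pro:split order} once at the end to kill the lower-genus pieces sitting in each summand. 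The paper's argument stays closer to standard incompressible-surface technique; yours is more homological and arguably more direct, trading the incompressibility reduction for the (routine but essential) observation that $2$-surgery is $\mathbb{Z}/2$-homologically invisible. Both directions of your argument are sound; for the ``only if'' direction you could have shortcut the loop argument by simply citing the injection $H_2(P_i';\mathbb{Z}/2)\hookrightarrow H_2(M;\mathbb{Z}/2)$ you already set up.
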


\begin{proof}
When $g=0$, this is equivalent to that $M$ does not contain $S^1\times S^2$ as a prime factor. Assume that it is true for $g=k-1$. When $g=k$, we only need to show the ``if'' part. By induction, we can assume that the embedded $\Sigma_k$ in $M$ is incompressible. For a sphere in $M$ intersecting $\Sigma_k$ transversely, an innermost disk in the sphere together with a disk in $\Sigma_k$ will form a sphere separating $M$. Then we can remove the intersection by an isotopy or reduce the problem to a 3-manifold with fewer prime factors. Hence the proof can be finished by induction.
\end{proof}

\begin{example}\label{exa:splittable}
Any spherical 3-manifold is a rational homology 3-sphere, hence is $\Sigma_g$-splittable for any $g$. Any hyperbolic 3-manifold in $\mathcal{M}$ contains no incompressible torus, hence is $\Sigma_1$-splittable. Any irreducible 3-manifold is $\Sigma_0$-splittable. $S^1\times S^2$ is not $\Sigma_g$-splittable for any $g$. The mapping torus of a hyperelliptic involution of $\Sigma_g$ is not $\Sigma_g$-splittable, but it is $\Sigma_k$-splittable for any $k<g$.
\end{example}


\section{Extension of periodic automorphisms}\label{sec:extending}

In this section, we prove our main result Theorem \ref{thm:stronger}. By Proposition \ref{pro:split order}, it implies Theorem \ref{thm:general case}. We also discuss several generalizations of Theorem~\ref{thm:stronger}, summarized as Theorem~\ref{thm:compact case} and Theorem~\ref{thm:periodic case}.

\begin{theorem}\label{thm:stronger}
Given a $\Sigma_1$-splittable $M$ in $\mathcal{M}$ and a finite subgroup $G$ in $Aut(\Sigma_g)$, if there is an embedding $e:\Sigma_g\rightarrow M$ such that each element of $G$ is extendable over $M$ with respect to $e$, then there exists $M'$ in $\mathcal{M}$ such that $M\succeq M'$ and $G$ is extendable over $M'$.
\end{theorem}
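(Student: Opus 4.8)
The plan is to analyze the images of $e(\Sigma_g)$ under the extended automorphisms and to use a compression argument to reduce the genus of the embedded surface, controlling how $M$ changes under each reduction via the relation $\succeq$. First I would set $\Sigma = e(\Sigma_g) \subset M$ and let $\phi \co G \to Aut(M)$ be... wait, we are not given $\phi$ a priori, only that each element is individually extendable. So the first step is subtler: for each $h \in G$ choose an extension $h' \in Aut(M)$, but these need not form a group. To get around this, I would instead argue more geometrically. Since each $h \in G$ preserves $\Sigma$ setwise, each $h'$ permutes the two sides of $\Sigma$ — here is where $\Sigma_1$-splittability will ultimately enter, but at genus $g$ the surface need not separate. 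The cleanest route: pass to the orbifold picture. The $G$-action on $\Sigma_g$ has quotient orbifold $\mathcal{O} = \Sigma_g/G$; I would try to build an equivariant handle structure or, better, run an equivariant compression.

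The key steps, in order, would be: (1) Replace $\Sigma$ by a $G$-invariant surface. Even without $\phi$ being a homomorphism, the union $\bigcup_{h \in G} h'(\Sigma)$ is not obviously $\Sigma$ again; instead I would use that each $h'$ carries $\Sigma$ to $\Sigma$ (as a set), so in fact each $h'$ already preserves $\Sigma$ — the issue is only that $h' \mapsto$ fails to be multiplicative. Averaging won't work for diffeomorphisms, so (2) I would instead induct on $g$. If $\Sigma$ is incompressible in $M$ on both sides and $G$ acts, analyze the JSJ/geometric decomposition: since $M$ is $\Sigma_1$-splittable one rules out essential tori appearing in the reduction, keeping genus-by-genus control. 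If $\Sigma$ is compressible on one side, pick a compressing disk $D$; the $G$-orbit of $\partial D$ under the action on $\Sigma_g$ gives a collection of curves which I would make disjoint and non-parallel by an equivariant isotopy (standard for finite group actions on surfaces, via the Meeks–Yau / minimal-surface or combinatorial arguments), then compress $\Sigma$ equivariantly along the whole orbit of disks in $M$. Compressing along a sphere's worth or a torus's worth of such disks is exactly the move that changes $M$ to some $M'$ with $M \succeq_s M'$ (pinching a handlebody neighborhood); crucially the resulting surface has smaller genus, still carries an induced $G'$-action ($G' \cong G$ or a quotient — one must check $G$ survives, which it does since the disks are permuted and interior-disjoint), and one iterates. (3) The base case is genus $0$ or genus $1$: a $G$-invariant sphere or torus bounds on one side (sphere always; torus by $\Sigma_1$-splittability), and pinching that side yields the final $M'$ with $G$ acting on the closed-up piece — but I would need the $G$-action to then extend over the cap, which for a ball is automatic (cone the action, using that $Aut(S^2)$ finite subgroups cap off, i.e. Alexander's trick equivariantly up to the Smith-conjecture-type input) and for a solid torus follows from the classification of finite group actions on solid tori.

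The main obstacle will be step (2)'s bookkeeping: ensuring the group $G$ itself (not merely a section) descends through each compression, since we only started with elementwise extendability rather than a homomorphism $\phi$. I expect one must phrase the induction so that the hypothesis at each stage is again "elementwise extendable over a $\Sigma_1$-splittable manifold," and show elementwise extendability is preserved under the equivariant compression — this is where the curve-orbit being made equivariantly disjoint is essential, because then each $h'$ can be modified to respect the disk system, and the modified maps still extend the $G$-action on the compressed surface. A secondary subtlety is that compressing might disconnect the surface or make it non-$G$-invariant if $\partial D$'s orbit fails to bound disjoint disks on the same side; handling the case where the orbit straddles both sides (some disks on each side) requires an extra argument, possibly reducing to an essential-annulus or essential-torus situation that $\Sigma_1$-splittability again controls. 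Once the surface is pushed down to genus $\le 1$ the rest is the relatively routine capping argument invoking known equivariant fillings of balls and solid tori.
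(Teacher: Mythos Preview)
There is a genuine gap in your plan, and it stems from a misreading of what the relation $\succeq$ does and where the real work lies.

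First, compressing the surface does \emph{not} change $M$. When you surger $\Sigma$ along a $G$-orbit of disks, the ambient manifold is untouched; only the embedded surface changes. So the sentence ``compressing along a sphere's worth or a torus's worth of such disks is exactly the move that changes $M$ to some $M'$ with $M \succeq_s M'$'' is simply false. The relation $\succeq_s$ is defined by locating a torus-bounded submanifold $N\subset M$ and replacing $\overline{M-N}$ by a solid torus along the rationally null-homologous slope; it has nothing to do with compressing a surface. Consequently your induction, as written, never produces any $M'\ne M$, and you are left trying to show $G$ is extendable over $M$ itself --- which is in general false.

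Second, and more seriously, your induction does not terminate. Compression stops as soon as the surface becomes incompressible on both sides, and this can happen at \emph{any} genus, not just $g\le 1$. You acknowledge this with ``if $\Sigma$ is incompressible\ldots analyze the JSJ/geometric decomposition,'' but that one line is the entire content of the theorem. The paper's proof does exactly this: cut $M$ along $e(\Sigma_g)$, peel off the characteristic compression body $V$ (extending $G$ over $V$ via Lemma~\ref{lem:extend comp}), and then confront the incompressible remainder $Y=\overline{X-V}$. On $Y$ one runs the annulus/torus JSJ decomposition and uses Mostow rigidity (Lemma~\ref{lem:extend hyp}) on the hyperbolic pieces and a Teichm\"uller argument (Lemma~\ref{lem:extend bundle}) on the $I$-bundle pieces to promote elementwise extendability to an honest monomorphism $\phi\co G_Y\to Aut(Y_0)$. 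Only \emph{then} does $\succeq$ enter: the leftover JSJ pieces sit behind tori in $\partial Y_0$, and it is these pieces that get replaced by solid tori, using $\Sigma_1$-splittability to guarantee the tori separate and using the canonicity of the filling slope to keep everything $G$-equivariant. Your genus-$1$ ``base case'' is a special instance of this last step, but it cannot be reached by compression alone, and your description of it (``torus bounds on one side'') conflates separating with bounding a solid torus.

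In short: the compression step you describe is essentially the easy part (the compression-body lemma), and the substantive argument --- extending $G$ across an incompressible boundary via JSJ, rigidity, and $I$-bundle lemmas, then invoking $\succeq$ to kill the torus-bounded leftovers --- is exactly what your proposal waves away.
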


Its proof uses similar ideas as \cite{Bo1}, as well as \cite{Fl}. According to the sphere and torus/annulus decompositions of $M-\Sigma_g$, we can change complicated 3-manifolds embedded in $M-\Sigma_g$ into 3-balls or solid tori such that $G$ is extendable, and each replacement corresponds to a relation $\succeq$. We first list several fundamental results in the 3-manifold theory that will be used in the proof.

In what follows, Lemma \ref{lem:sphere dec} is based on Kneser-Milnor's sphere decomposition theorem (see \cite[Lemma~A.1]{Bo1}). Theorem \ref{thm:disk dec} can be found in \cite{Bo1}. Theorem \ref{thm:torus dec} is based on the JSJ-decomposition theorem and Thurston's hyperbolizaion theorem (see \cite{Bo2}). Lemmas~\ref{lem:extend comp} and~\ref{lem:extend bundle} can be found in \cite[Propositions~4.1~and~4.3]{Bo1} for cyclic $G$ and closed $\partial_I Z$. Theorem \ref{thm:extend hyp} is based on Mostow's hyperbolic rigidity theorem and Waldhausen's isotopy theorem (see \cite{Bo2} and \cite{Wa}).

\begin{lemma}\label{lem:sphere dec}
Let $X$ be a compact connected oriented 3-manifold with $\partial X\neq\emptyset$. As in Proposition \ref{pro:degree one}, there exists a collection of disjoint spheres $\mathcal{S}$ in $X-\partial X$, which decomposes $X$ into a punctured $S^3$ and several one-punctured prime factors. Suppose that $\mathcal{S}$ and $\mathcal{S}'$ are two such collections, then there exists an element $f$ in $Aut(X)$ such that $f(\mathcal{S})=\mathcal{S}'$ and $f$ fixes $\partial X$.
\end{lemma}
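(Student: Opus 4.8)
The plan is to deduce both assertions from the Kneser--Milnor prime decomposition theorem, treating $\partial X$ as inert data that is simply carried along. For the existence of $\mathcal{S}$ I would argue exactly as in the proof of Proposition~\ref{pro:degree one}: the prime decomposition theorem gives a finite system of disjoint $2$-spheres in $X-\partial X$ whose complementary pieces become prime 3-manifolds once their new sphere boundary components are capped with balls, and exactly one piece $P_0$ contains $\partial X$. After reorganizing the sphere system into ``star'' form --- so that $X$ is recovered from an $S^3$, with one open ball removed for each nontrivial prime factor $P_i$ of $X$ (and with $P_0\cap\partial X$ left in place), by gluing a once-punctured copy of $P_i$ along the corresponding boundary sphere --- the distinguished piece is a punctured $S^3$ and the rest are once-punctured prime factors; spheres cutting off an $S^3$ summand bound balls and are discarded, so the resulting $\mathcal{S}$ is reduced (no component bounds a ball, no two components are parallel). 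Degenerate cases, such as $X$ itself being prime, are covered by taking $\mathcal{S}=\emptyset$.

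For uniqueness, suppose $\mathcal{S}$ and $\mathcal{S}'$ are two such systems. They have the same number of components, namely the number of nontrivial prime factors of $X$ with multiplicity, which is a homeomorphism invariant of $X$. I would then run the standard innermost-disk cut-and-paste: put $\mathcal{S}$ and $\mathcal{S}'$ in general position so that $\mathcal{S}\cap\mathcal{S}'$ is a union of circles; choose a circle innermost on some component of $\mathcal{S}'$, bounding a disk $D\subset\mathcal{S}'$, and let $D'$ be one of the two disks into which $\partial D$ cuts the component of $\mathcal{S}$ containing it, so that $D\cup_\partial D'$ is an embedded sphere in $X$. Every complementary piece of $\mathcal{S}$ other than the punctured $S^3$ is irreducible after capping; the punctured $S^3$ piece is reducible, but any sphere lying inside it already bounds a ball there. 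Hence $D\cup_\partial D'$ either bounds a ball in $X$ or is parallel to a component of $\mathcal{S}$, and in either case a small ambient isotopy supported in $X-\partial X$ removes the chosen circle while creating no new intersections. Iterating makes $\mathcal{S}$ disjoint from $\mathcal{S}'$; then $\mathcal{S}$ lies in a single complementary piece of $\mathcal{S}'$, and since neither system has a ball-bounding component nor a pair of parallel components, primeness of the capped pieces lets a further isotopy of $X$, again supported away from $\partial X$, carry $\mathcal{S}$ onto $\mathcal{S}'$. The time-one map $f$ of the concatenated ambient isotopy lies in $Aut(X)$ --- orientation-preserving because it is isotopic to the identity --- satisfies $f(\mathcal{S})=\mathcal{S}'$, and is the identity on $\partial X$. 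This is the content of \cite[Lemma~A.1]{Bo1}.

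The step I expect to be the main obstacle is keeping the entire uniqueness argument relative to $\partial X$: one must check that every isotopy produced above --- the innermost-disk moves and the final ``parallel spheres'' isotopy --- can be chosen with support in the interior of $X$, for instance by first pushing both systems off a fixed collar of $\partial X$ and verifying that no move is forced to cross that collar. This $\partial X$-relative bookkeeping is precisely the refinement that the classical Kneser--Milnor uniqueness needs for the present lemma. A secondary point requiring care is that the complementary piece of $\mathcal{S}$ containing $\partial X$ is only $S^3$ after capping, hence reducible, so irreducibility can be invoked only on the remaining pieces; one must note separately that an innermost sphere meeting only this $S^3$ piece automatically bounds a ball there and therefore causes no trouble.
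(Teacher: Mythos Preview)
There is a genuine gap: the conclusion that $f$ can be taken to be the time-one map of an ambient isotopy rel $\partial X$ is false in general, and the argument you outline does not establish it.

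A concrete obstruction: take $X=(P\#P)\setminus B^3$ with $P$ closed, irreducible, and $\pi_1(P)\neq 1$ (so $\partial X=S^2$ sits in the punctured-$S^3$ piece). The slide homeomorphism $h_\gamma$ that drags one $P$-summand around a nontrivial loop $\gamma$ in the other summand fixes $\partial X$ pointwise and carries a star-form system $\mathcal S$ to another star-form system $\mathcal S'=h_\gamma(\mathcal S)$. On $\pi_1(X,\ast)=G\ast G$ (basepoint on $\partial X$) the map $h_\gamma$ induces the partial conjugation $g_1\mapsto\gamma g_1\gamma^{-1}$, $g_2\mapsto g_2$, which is not the identity. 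Any homeomorphism preserving a fixed star-form system and fixing $\partial X$ must send each free factor $G_i$ to some $G_{\sigma(i)}$ (the punctured-$S^3$ piece is simply connected, so no conjugation appears), hence cannot realize this partial conjugation. It follows that $\mathcal S$ and $\mathcal S'$ are \emph{not} isotopic rel $\partial X$, so no isotopy $f$ as you describe can exist.

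Tracing this back to your argument, two steps break down. First, when the innermost disk $D$ lies in the punctured-$S^3$ piece $Q$, the sphere $D\cup D'$ need not bound a ball \emph{in $X$} nor be parallel to a single component of $\mathcal S$: the ball it bounds in $\widehat Q\cong S^3$ may swallow several of the caps, i.e.\ separate several boundary spheres of $Q$ from $S$, and then no isotopy of $\mathcal S$ supported in $X$ removes the intersection circle. Second, even granting disjointness, the assertion that ``$\mathcal S$ lies in a single complementary piece of $\mathcal S'$'' is unjustified and typically false---different spheres of $\mathcal S$ may land in different once-punctured prime pieces of $X\setminus\mathcal S'$.

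The lemma itself (which the paper simply cites as \cite[Lemma~A.1]{Bo1} without proof) only asks for an orientation-preserving homeomorphism fixing $\partial X$, not an isotopy. The standard route is to invoke uniqueness of the Kneser--Milnor factors to obtain a bijection between the once-punctured prime pieces of $X\setminus\mathcal S$ and those of $X\setminus\mathcal S'$, choose orientation-preserving homeomorphisms between matched pieces (and between the two punctured-$S^3$ pieces, fixing $\partial X$ and respecting the bijection of boundary spheres), and then glue, adjusting by isotopies near the spheres since any two orientation-preserving homeomorphisms of $S^2$ are isotopic. The resulting $f\in Aut(X)$ fixes $\partial X$ and sends $\mathcal S$ to $\mathcal S'$, but is in general not isotopic to the identity.
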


\begin{theorem}\label{thm:disk dec}
Let $X$ be an irreducible compact connected oriented 3-manifold with $\partial X\neq\emptyset$. Then, up to isotopy, $X$ contains a unique compression body $V$ such that the external boundary $\partial_e V$ is equal to $\partial X$, the closure $\overline{X-V}$ contains no essential compression disk for its boundary $\partial_i V$, and no component of $\overline{X-V}$ is a 3-ball.
\end{theorem}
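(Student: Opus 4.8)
This is the existence and uniqueness statement for the \emph{characteristic compression body} of $X$, and my plan is to prove the two halves separately. For existence, I would start from a collar $\partial X\times[0,1]\subset X$ with $\partial X\times\{0\}=\partial X$ and enlarge it greedily: while the current internal boundary is compressible in the closure of its complement, pick an essential compressing disk there and attach a $2$-handle along it; as soon as a $2$-sphere appears as a component of the internal boundary, cap it off with a $3$-ball, which is available because $X$ is irreducible. At every stage the region built is a compression body with external boundary $\partial X$, since attaching $2$- and $3$-handles along the internal boundary of a compression body again yields a compression body. An essential compression replaces a boundary component of genus $h$ by one of smaller genus or by two components of positive genus, so a standard complexity argument---or Kneser--Haken finiteness, since the components at each stage are disjoint incompressible closed surfaces in $X$---shows the process terminates, and the resulting $V$ then satisfies all three conditions.

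For uniqueness, suppose $V_1$ and $V_2$ both satisfy the conclusion. I would first isotope them to share a collar of $\partial X$, so $\partial_e V_1=\partial_e V_2=\partial X$, and put $\partial_i V_1$ and $\partial_i V_2$ into a position with the fewest circles of intersection. Any circle inessential on one of the two surfaces bounds an innermost disk on it, and irreducibility of $X$ together with incompressibility of the other surface in the complementary region it faces removes that circle by an ambient isotopy; the hypothesis that neither complement has a $3$-ball component is exactly what lets these surgeries be isotopies rather than only homotopies. So I may assume every remaining circle of $\partial_i V_1\cap\partial_i V_2$ is essential on both surfaces, and then an analysis of how the pieces of $V_1$ meet the splitting $X=V_2\cup\overline{X-V_2}$ forces an essential compression disk for $\partial_i V_2$ in $\overline{X-V_2}$ (or, symmetrically, for $\partial_i V_1$ in $\overline{X-V_1}$), contradicting the hypothesis; hence $\partial_i V_1$ and $\partial_i V_2$ can be made disjoint. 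A further argument using incompressibility and the absence of ball components shows $V_1$ and $V_2$ are nested, say $V_2\subseteq V_1$, and $\partial_i V_2$ is then an incompressible closed surface in the compression body $V_1$ separating $\partial_e V_1$ from $\partial_i V_1$; such a surface is parallel to $\partial_i V_1$, so $\overline{V_1-V_2}$ is a product and $V_1$ is isotopic to $V_2$.

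The step I expect to be hardest is making $\partial_i V_1$ and $\partial_i V_2$ disjoint: beyond the routine innermost-disk cleanup, excluding essential intersection circles and then setting up the nesting both demand a careful case analysis in which the incompressibility condition and the ``no $3$-ball component'' condition are genuinely needed---without the latter, $V$ would be determined only up to inserting or deleting product collars, not up to isotopy. An argument along these lines is carried out in \cite{Bo1}, and that is the treatment I would follow.
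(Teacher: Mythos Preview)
Your proposal is correct and in fact more detailed than the paper's own treatment: the paper does not give a proof of this theorem at all, but simply cites \cite{Bo1} for it. Since your sketch follows exactly the existence/uniqueness argument of Bonahon in \cite{Bo1}, you are aligned with the paper's approach.
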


\begin{theorem}\label{thm:torus dec}
Let $Y$ be a compact connected oriented 3-manifold with $\partial Y\neq\emptyset$, which is irreducible and boundary irreducible. Then, up to isotopy, there is a unique minimal collection of disjoint properly embedded essential tori and annuli $\mathcal{T}$ in $Y$ such that for every piece $Z$ obtained by cutting $Y$ along $\mathcal{T}$, either

(i) $Z$ is an $I$-bundle over a compact surface, where ``$I$'' denotes the unit interval, such that the corresponding $\partial I$-bundle is equal to $Z\cap\partial Y$, or

(ii) $Z$ is a Seifert manifold such that $Z\cap\partial Y$ is a union of fibers, or

(iii) Let $Z^*$ be the manifold obtained by removing the tori in $\partial Z$ and the annuli in $\partial Z\cap\mathcal{T}$, then $Z^*$ admits a complete hyperbolic structure with totally geodesic boundary and with finite volume, and the removed torus and annuli in $\partial Z$ are limits of ends of $Z^*$.
\end{theorem}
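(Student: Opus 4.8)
The plan is to deduce this statement by assembling two major structure theorems: the Jaco--Shalen--Johannson decomposition of $Y$ along its characteristic submanifold, and Thurston's hyperbolization theorem for Haken manifolds in its pared form. First I would fix the relevant notion of an \emph{essential} surface: a properly embedded torus or annulus in $Y$ that is incompressible and boundary-incompressible and is not boundary-parallel. Since $Y$ is compact, irreducible and boundary-irreducible, Haken's finiteness theorem bounds the number of disjoint, pairwise non-parallel essential tori and annuli one can embed in $Y$; hence a maximal such collection $\mathcal{T}$ exists, and after deleting any component that becomes inessential in the manifold cut along the remaining ones, we may take $\mathcal{T}$ minimal. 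The content to be established is then: (1) every piece $Z$ of $Y$ cut along $\mathcal{T}$ is of exactly one of the three listed types; (2) the pieces that are neither $I$-bundles nor Seifert manifolds carry the asserted hyperbolic structure with totally geodesic boundary; and (3) $\mathcal{T}$ is unique up to ambient isotopy.

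For (1), the defining property of a piece $Z$ coming from a minimal system is that every essential torus or annulus in $Z$ is isotopic into the frontier $Z\cap\mathcal{T}$; equivalently, after capping the frontier, $Z$ is atoroidal and anannular. The classical theory of the characteristic submanifold $W\subset Y$ (Waldhausen, Jaco--Shalen, Johannson) shows that the part of $Y$ swept out by essential annuli and tori that cannot be pushed off is a union of Seifert fibered pieces and $I$-bundles, with the frontier annuli and tori being fibers or $\partial I$-bundles, and that $\mathcal{T}$ is exactly the frontier of $W$. Thus the components of $W$ are the pieces of types (i) and (ii), and the components of $\overline{Y-W}$ are atoroidal and anannular rel frontier, together with the short list of degenerate Seifert exceptions ($T^2\times I$, solid tori, twisted $I$-bundles over the Klein bottle) which one assigns to type (ii). Checking that this case list is exhaustive is a finite enumeration using the classification of Seifert pieces that contain essential annuli.

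For (2), let $Z$ be one of the remaining pieces. It is irreducible, atoroidal and anannular rel frontier, and not an $I$-bundle or Seifert manifold; since $\partial Y\neq\emptyset$ and $Y$ is boundary-irreducible, the portion of $\partial Z$ lying in $\partial Y$ is nonempty and incompressible, so $Z$ is Haken. Give $Z$ the structure of a pared manifold with parabolic locus $P$ equal to the tori in $\partial Z$ together with the frontier annuli $Z\cap\mathcal{T}$; atoroidality and anannularity are exactly the pared conditions. Thurston's hyperbolization theorem for pared Haken manifolds (see \cite{Bo2} and the references therein) then provides a geometrically finite hyperbolic structure on $\mathrm{int}(Z)$ without accidental parabolics in which the components of $P$ correspond to cusps and $\partial Z\setminus P$ is realized by totally geodesic surfaces. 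Removing $P$ from $Z$, that is, removing precisely the tori of $\partial Z$ and the frontier annuli, yields the complete finite-volume hyperbolic manifold $Z^*$ with totally geodesic boundary of statement (iii), the removed tori and annuli being limits of ends.

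The last point, (3), is Johannson's uniqueness theorem for the characteristic submanifold. I would prove it by showing that any minimal system $\mathcal{T}'$ can be isotoped to lie in, and then to coincide with the frontier of, the characteristic submanifold determined by $\mathcal{T}$ --- this uses the enclosing property (every essential annulus or torus is isotopic into $W$) together with the fact that a Seifert fibration of a piece is unique up to isotopy outside the short list of exceptional small Seifert pieces, which can be handled directly. I expect (2) and (3) to be the genuine obstacles: (2) because it invokes the full strength of Thurston's hyperbolization in the pared setting, and there is no shortcut to the totally geodesic boundary; and (3) because the isotopy uniqueness of the characteristic submanifold, though standard, rests on the delicate enclosing and deformation arguments of Jaco--Shalen and Johannson. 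By contrast the case analysis in (1) is comparatively routine bookkeeping.
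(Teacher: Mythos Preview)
The paper does not prove this theorem at all: it is stated as a known result from the literature, with the remark that it ``is based on the JSJ-decomposition theorem and Thurston's hyperbolization theorem (see \cite{Bo2}).'' Your outline---characteristic submanifold theory for the existence, case analysis, and uniqueness of $\mathcal{T}$, together with Thurston's hyperbolization for pared Haken manifolds to produce the geometrically finite structure on the atoroidal/anannular pieces---is exactly the synthesis the paper is invoking, and is the standard route to this statement. So your proposal is correct and matches the paper's attribution; there is simply no in-paper proof to compare against beyond that one-line citation.
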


\begin{lemma}\label{lem:extend comp}
Let $V$ be a connected oriented compression body. For a finite subgroup $G$ in $Aut(\partial_e V)$, if each element $h$ of $G$ is extendable over $V$, then the corresponding element $h'$ in $Aut(V)$ can be deformed to an element $\phi(h)$ in $Aut(V)$ by an isotopy fixing $\partial_e V$ such that the map $\phi:G\rightarrow Aut(V)$ is a group monomorphism.
\end{lemma}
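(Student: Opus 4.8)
The plan is to reduce the problem to the building blocks of a compression body together with a realization question for finite group actions. Recall the normal form $V=(\partial_iV\times I)\cup(\text{$3$-balls})\cup(\text{$1$-handles})$, where the $1$-handles are attached to $\partial_iV\times\{1\}$ and to the boundary spheres of the balls and $\partial_eV$ is the resulting outer boundary; dually $V$ carries a complete system of meridian disks $\mathcal D$ (the co-cores of the $1$-handles), with $\partial\mathcal D\subset\partial_eV$, such that cutting $V$ along $\mathcal D$ gives the disjoint union $\{P\times I:P\text{ a component of }\partial_iV\}\sqcup\{\text{$3$-balls}\}$. Two degenerate cases are immediate: if $V=\partial_eV\times I$ one takes $\phi(h)=h\times\operatorname{id}_I$, and if $V$ is a $3$-ball one conjugates the finite subgroup of $Aut(S^2)$ into $SO(3)$ and cones it off (coning is functorial, so the conjugated cone gives a strict homomorphism). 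In general I would first apply the Nielsen realization theorem (classical for $g\le1$, Kerckhoff for $g\ge2$) to conjugate $G$ so that it acts by isometries of a hyperbolic or flat metric on $\partial_eV$; this makes the gluing of the blocks $G$-equivariant on their $\partial_eV$-faces.

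The first technical step is a rigidity statement: \emph{any $\psi\in Aut(V)$ restricting to the identity on $\partial_eV$ is isotopic, through an isotopy fixing $\partial_eV$, to a diffeomorphism supported in a collar of $\partial_iV$.} Indeed $\psi(\mathcal D)$ is again a complete meridian system with the same boundary $\partial\mathcal D$; since $V$ is irreducible and a properly embedded disk in an irreducible $3$-manifold is determined up to isotopy rel boundary by its boundary curve, after cleaning up interior intersections $\psi(\mathcal D)$ is isotopic to $\mathcal D$ rel $\partial\mathcal D$, and the isotopy may be taken to fix $\partial_eV$. After applying it, $\psi$ fixes each disk $D_i$ (it fixes $\partial D_i\subset\partial_eV$ and permutes the $D_i$, whose boundaries are distinct), and by Alexander's trick on each $D_i$ we may assume $\psi$ is the identity near $\mathcal D$. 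Cutting along $\mathcal D$, the induced diffeomorphism of $\{P\times I\}\sqcup\{\text{balls}\}$ is the identity on the balls up to isotopy rel their boundary spheres (Cerf, $\pi_0\operatorname{Diff}(B^3,\partial B^3)=1$), and on each $P\times I$ it fixes the $P\times\{1\}$ face, hence is isotopic rel $P\times\{1\}$ to a map supported near $P\times\{0\}\subset\partial_iV$ (via $\pi_0\operatorname{Diff}(P\times I\text{ rel }P\times\{1\})\cong\mathrm{Mod}(P)$, Waldhausen). Reassembling the isotopies proves the claim, and in particular any two extensions of a single $h\in G$ agree up to an isotopy fixing $\partial_eV$ followed by a diffeomorphism supported in a collar of $\partial_iV$; enlarging the collar once so it works for all of the finitely many $h\in G$, we fix one such collar for the rest of the argument.

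With this in hand the homomorphism is assembled from the blocks. Since each $h\in G$ extends, its induced automorphism $h_*$ of $\pi_1(\partial_eV)$ preserves the kernel $\mathbf K$ of $\pi_1(\partial_eV)\to\pi_1(V)$ (as $h_*$ extends to the injective $\pi_1(V)\to\pi_1(V)$), so $G$ acts on the pair $(\partial_eV,\mathbf K)$, equivalently on the compression data that reconstructs $V$ from $\partial_eV$. Using the rigidity step to organise the choices, I would build a $G$-equivariant version of the handle decomposition by induction on the number of $1$-handles: at each stage a subcollection of meridian disks that is $G$-invariant up to isotopy rel $\partial_eV$ is straightened to be honestly $G$-invariant and standard, $G$ is extended equivariantly over the corresponding $1$-handles, and the problem is reduced to a compression body with fewer $1$-handles, terminating in the product and ball cases above. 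The resulting $\phi(h)$ then form a genuine homomorphism by construction, and $\phi$ is a monomorphism because composing it with the restriction $Aut(V)\to Aut(\partial_eV)$ gives the inclusion of $G$.

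The step I expect to be the real obstacle is precisely this last assembly: passing from ``each element extends, with the extension controlled only up to a collar isotopy'' to a strict $G$-action forces one to straighten a whole compatible family of extensions simultaneously, and in particular to produce genuinely $G$-invariant meridian disks rather than disks invariant only up to isotopy. This is where the argument must go beyond the cyclic case of \cite[Proposition~4.1]{Bo1}, in which it suffices to extend one map and control one of its powers. What makes the straightening tractable is the disk-system rigidity, which reduces the $3$-dimensional bookkeeping to the combinatorics of the $G$-action on $\mathcal D$ together with the already-understood product and ball pieces.
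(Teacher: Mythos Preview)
Your outline is structurally on target and in fact parallels the paper's argument: dispose of the ball and product cases directly, put a $G$-invariant hyperbolic metric on $\partial_eV$ via Nielsen realization, and then induct by cutting along a $G$-invariant family of meridian disks. Your rigidity statement (an automorphism of $V$ that is the identity on $\partial_eV$ is, rel $\partial_eV$, supported near $\partial_iV$) is correct and is exactly what lets one straighten the disks themselves once their boundary curves are pinned down. You have also correctly located the one genuine gap: nothing in your proposal actually \emph{produces} a set of compressing curves on $\partial_eV$ that is honestly $G$-invariant rather than only invariant up to isotopy. Disk-system rigidity handles the disks \emph{after} you have the curves; it does not manufacture the curves.

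The missing idea, and the one the paper supplies, is to exploit the $G$-invariant metric you already arranged. Among all essential simple closed curves on $\partial_eV$ bounding disks in $V$, take a geodesic $c$ of \emph{minimal length}. For each $h\in G$ the image $h(c)$ again bounds a disk in $V$ (because $h$ extends over $V$) and has the same minimal length (because $h$ is an isometry). Minimality then forces $h(c)=c$ or $h(c)\cap c=\emptyset$: otherwise a cut-and-paste at an intersection point would yield a strictly shorter compressing curve. Thus the $G$-orbit $\{c_1,\dots,c_m\}$ of $c$ is a disjoint family of compressing curves, bounded by disjoint disks $D_1,\dots,D_m$ in $V$. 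Irreducibility lets you isotope each chosen extension $h'$ rel $\partial_eV$ to preserve $\bigcup_i D_i$, and then your Alexander/product arguments extend the $G$-action over a regular neighbourhood $U$ of $\partial_eV\cup\bigcup_i D_i$. The components of $\overline{V\setminus U}$ are compression bodies with exterior boundary of strictly smaller genus, and the induction you proposed now goes through. With this one trick inserted, your proof is essentially the paper's.
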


\begin{lemma}\label{lem:extend bundle}
Let $Z$ be a connected oriented $I$-bundle over a compact surface with negative Euler characteristic. Let $\partial_I Z$ denote the $\partial I$-bundle. For a finite subgroup $G$ in $Aut(\partial_I Z)$, if each element $h$ of $G$ is extendable over $Z$, then the corresponding element $h'$ in $Aut(Z)$ can be deformed to an element $\phi(h)$ in $Aut(Z)$ by an isotopy fixing $\partial_I Z$ such that the map $\phi:G\rightarrow Aut(Z)$ is a group monomorphism.
\end{lemma}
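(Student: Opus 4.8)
The plan is to follow Bonahon's treatment of the cyclic case \cite[\S4]{Bo1} and supply the two extra ingredients needed here: a non-cyclic finite $G$, and a base surface with boundary. Write $p\co Z\to F$ for the $I$-bundle; since $\chi(F)<0$, $Z$ is an aspherical Haken manifold which deformation retracts onto its zero section, so $\pi_1(Z)\cong\pi_1(F)$ is torsion free with trivial centre. Each component of $\partial_I Z$ covers $F$ with degree $1$ or $2$ and so has Euler characteristic $\le2\chi(F)<0$, whereas the components of $\partial Z$ over $\partial F$ are annuli or M\"obius bands; hence every element of $Aut(Z)$ preserves $\partial_I Z$ setwise. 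I will also use Waldhausen's isotopy theorem (see \cite{Wa}): a self-homeomorphism of $Z$ that is homotopic to $\mathrm{id}_Z$ by a homotopy which is an isotopy on $\partial_I Z$ is isotopic to $\mathrm{id}_Z$ rel $\partial_I Z$. Since $\partial_I Z\hookrightarrow Z$ is $\pi_1$-surjective (in the orientable case each sheet is a $\pi_1$-isomorphism, in the twisted case pass to the orientation double cover and argue equivariantly), any homeomorphism of $Z$ that is the identity on $\partial_I Z$ is $\pi_1$-trivial, hence homotopic and then isotopic to $\mathrm{id}_Z$ rel $\partial_I Z$. Therefore any two extensions over $Z$ of a given $h\in Aut(\partial_I Z)$ are isotopic rel $\partial_I Z$, and it suffices to produce \emph{one} monomorphism $\phi\co G\to Aut(Z)$ with $\phi(h)|_{\partial_I Z}=h$ for all $h$; such a $\phi(h)$ is then automatically a deformation of the given $h'$ rel $\partial_I Z$.

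To build $\phi$, I would first treat the orientable base, $Z=F\times I$ and $\partial_I Z=F\sqcup F$. Let $G_0\le G$ be the subgroup preserving each copy of $F$ (index $1$ or $2$), and for $h\in G_0$ let $h_0,h_1\in Aut(F)$ be its two restrictions. Because $h$ extends over $Z$ and the inclusions $F\times\{0\},F\times\{1\}\hookrightarrow F\times I$ are homotopic, $h_0$ and $h_1$ induce the same element of $\mathrm{Out}(\pi_1F)$, hence are isotopic, and (as $\chi(F)<0$) both are non-trivial whenever $h\ne\mathrm{id}$; thus $h\mapsto h_0$ and $h\mapsto h_1$ are faithful $G_0$-actions on $F$ determining one and the same finite subgroup of the mapping class group. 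By Nielsen realization for finite groups --- the fixed-point locus of a finite subgroup in Teichm\"uller space is a non-empty cell --- these two actions are conjugate by a homeomorphism isotopic to the identity, hence are joined by a path $t\mapsto\rho_t$ of faithful $G_0$-actions on $F$ with $\rho_0=(h\mapsto h_0)$ and $\rho_1=(h\mapsto h_1)$. Setting $\phi(h)(x,t)=(\rho_t(h)(x),t)$ gives a monomorphism $\phi\co G_0\to Aut(Z)$ with $\phi(h)|_{\partial_I Z}=h$. If $[G:G_0]=2$, extend $\phi$ across a coset representative $\tau$ by the analogous orientation-reversing ``swap'' formula $(x,t)\mapsto(\zeta_t(x),1-t)$, using $\tau^2\in G_0$ and $\tau G_0\tau^{-1}=G_0$ to choose $\zeta$ compatibly. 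The twisted $I$-bundle case reduces to this one by regarding $\partial_I Z$ as the orientation double cover of $F$ and running the same construction on $F$, equivariantly for the deck involution.

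The two places where the argument genuinely goes beyond \cite{Bo1}, and which I expect to be the main obstacles, are exactly these last moves. First, obtaining an honest homomorphism rather than one defined only up to isotopy: in the cyclic case one simply sends a generator to one of its extensions, and for non-cyclic $G$ this is precisely what Nielsen realization together with the connectedness (indeed contractibility) of the fixed Teichm\"uller locus replaces, with the coset bookkeeping for $G/G_0$ requiring care in the choice of $\zeta$. Second, the base with boundary, which is absent from \cite{Bo1}: one option is to run the construction of the previous paragraph verbatim over the bordered surface $F$, since Nielsen realization and Teichm\"uller theory are equally available for $2$-orbifolds with boundary; the alternative is to double $Z$ along the annuli and M\"obius bands over $\partial F$, obtaining a closed-base $I$-bundle $DZ$ on which $G$ doubles, apply the closed-base statement to $DZ$, and then verify that the resulting homomorphism can be chosen equivariant for the doubling involution so that it restricts back to a monomorphism into $Aut(Z)$. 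This final equivariance check --- equivalently, the compatible choice of the swap extension above --- is the step I expect to demand the most work.
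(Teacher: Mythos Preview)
Your proposal is correct and follows essentially the same route as the paper: both arguments reduce to the product case $Z=\Sigma\times I$, use Teichm\"uller theory (the paper via the unique Teichm\"uller map between $(\Sigma,m_0)$ and $(\Sigma,m_1)$, you via connectedness of the fixed locus in Teichm\"uller space) to conjugate the two boundary representations $\rho_0,\rho_1$ by an isotopy $R_t$, then set $\phi(h)(x,t)=(R_t\rho_0(h)R_t^{-1}(x),t)$, and finally handle the twisted bundle by passing to the orientation double cover and the bordered base by doubling along the vertical annuli. Your treatment is slightly more careful than the paper's in two respects---you explicitly record the uniqueness of extensions rel $\partial_I Z$ and you flag the index-two swap case $[G:G_0]=2$, which the paper's proof tacitly assumes does not occur---but these are elaborations of the same argument rather than a different one.
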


\begin{theorem}\label{thm:extend hyp}
Let $Z^*$ be a connected oriented complete hyperbolic 3-manifold with (possibly empty) totally geodesic boundary and with finite volume. (Note that the finite volume condition implies that the ends of $Z^*$ are annular or toral.) Then each element in $Aut(Z^*)$ is isotopic to a unique isometry in $Aut(Z^*)$. Moreover, the isometry must have finite order.
\end{theorem}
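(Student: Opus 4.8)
The plan is to combine Mostow--Prasad rigidity with Waldhausen's isotopy theorem, disposing of the totally geodesic boundary by a doubling trick. First I would reduce to the purely cusped case. Let $\tau$ be the reflection of the double $DZ^*$ obtained by gluing two copies of $Z^*$ along the totally geodesic part of its boundary. An annular end of $Z^*$ meets the geodesic boundary in two circles, so it doubles to a torus; hence $DZ^*$ is a complete finite-volume hyperbolic $3$-manifold all of whose ends are toral cusps, and $Z^*$ is recovered as the quotient $DZ^*/\langle\tau\rangle$, with $\tau$ fixing the geodesic boundary surface. A given $f\in Aut(Z^*)$ preserves $\partial Z^*$, hence extends to a self-homeomorphism $Df$ of $DZ^*$ commuting with $\tau$.

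For existence, I would apply Mostow--Prasad rigidity to $DZ^*$. Since $DZ^*$ is aspherical, the homotopy class of $Df$ is determined by the outer automorphism $(Df)_*\in Out(\pi_1 DZ^*)$, and rigidity provides a unique isometry $\hat g$ in that class. By this uniqueness, $\hat g$ inherits the symmetry $\hat g\tau=\tau\hat g$ from $Df$, so it descends to an isometry $g$ of $Z^*$ homotopic to $f$; and $g$ is orientation-preserving because $f$ is and homotopic maps have equal degree. (Alternatively one can invoke a form of Mostow rigidity directly for geometrically finite Kleinian groups with geodesic boundary.)

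To upgrade the homotopy $f\simeq g$ to an isotopy, I would truncate the cusps of $Z^*$ to a compact, irreducible, boundary-incompressible, hence Haken, $3$-manifold $\overline Z$, on which $f$ and $g$ restrict to homotopic homeomorphisms, and invoke Waldhausen's isotopy theorem: homotopic homeomorphisms of a Haken $3$-manifold are isotopic. This produces an isometry isotopic to $f$. Uniqueness is then formal: if isometries $g_1,g_2$ are each isotopic to $f$, then $g_1g_2^{-1}$ is an isometry isotopic, hence homotopic, to the identity, so it induces the trivial element of $Out(\pi_1 Z^*)$; by the injectivity of the natural map $Isom(Z^*)\to Out(\pi_1 Z^*)$, which is the rigidity half of Mostow's theorem, $g_1g_2^{-1}=\mathrm{id}$. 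Finite order then follows because $\pi_1 Z^*$ is a non-elementary geometrically finite group with trivial centralizer in $Isom(\mathbb H^3)$, so $Z^*$ admits no nontrivial Killing field and $Isom(Z^*)$ is discrete; the closure of the cyclic group generated by $g$ is a compact abelian $0$-dimensional Lie group, hence finite, so $g$ has finite order.

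The main obstacle I anticipate is the bookkeeping at the boundary and the cusps. I must check that the isometry produced by rigidity genuinely respects the reflection $\tau$, so that it descends and keeps $\partial Z^*$ totally geodesic; that the annular ends behave correctly under doubling (becoming honest toral cusps rather than degenerate ends); and that Waldhausen's theorem is applied in a version valid for the truncated manifold $\overline Z$ with its mixed geodesic/torus boundary, controlling the homotopy on $\partial\overline Z$ so that the final isotopy does not disturb the identification of the geodesic boundary. Everything else is a direct application of the cited rigidity and isotopy theorems.
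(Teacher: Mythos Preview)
The paper does not actually prove this theorem: it is stated as a known result, with the remark that it ``is based on Mostow's hyperbolic rigidity theorem and Waldhausen's isotopy theorem (see \cite{Bo2} and \cite{Wa}).'' Your proposal is precisely a fleshing-out of that citation, and the approach is the standard one: double along the totally geodesic boundary to reduce to a cusped finite-volume manifold, apply Mostow--Prasad to get an isometry in the homotopy class, use uniqueness to make it $\tau$-equivariant so it descends, and then invoke Waldhausen on the compact truncation to promote homotopy to isotopy. This is correct and is exactly the argument the references contain.

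A few minor comments. Your worry about the annular ends is unfounded: an annular cusp of $Z^*$ has its two boundary circles on $\partial Z^*$, and doubling glues two such annuli along both circles to produce an honest rank-two toral cusp of $DZ^*$, so Mostow--Prasad applies without qualification. For finite order you can argue more directly than via Killing fields: the isometry group of $DZ^*$ is finite (finite volume, Mostow rigidity plus the fact that $Out(\pi_1)$ is finitely generated and the map $Isom\to Out$ is injective with discrete image, or simply Kazhdan--Margulis), and $Isom(Z^*)$ embeds in it as the centralizer of $\tau$. Finally, the version of Waldhausen you need is the one for Haken manifolds with boundary, where homotopic homeomorphisms that agree on the boundary are isotopic rel boundary; since your $g$ and $f$ already agree on $\partial Z^*$ after the rigidity step (both restrict to the same isometry of the geodesic boundary, and on the truncating tori one can adjust by a fiber-preserving isotopy), this goes through.
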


We sketch proofs of Lemmas \ref{lem:extend comp} and \ref{lem:extend bundle} according to \cite{Bo1}.

\begin{proof}[Proof of Lemma \ref{lem:extend comp}]
By classical results in differential topology, Lemma \ref{lem:extend comp} holds when $V$ is a 3-ball or an $I$-bundle over $\partial_e V$. Below we assume that $\partial_e V$ has genus bigger than one and contains an essential simple closed curve that bounds a disk in $V$. As in \cite{Th}, we can choose a $G$-invariant hyperbolic structure on $\partial_e V$. Then among the essential curves that bound disks in $V$ there exists a curve $c$ having the minimum length. Since each element $h$ of $G$ is extendable over $V$, the curve $c$ will satisfy $h(c)=c$ or $h(c)\cap c=\emptyset$ for any $h$ in $G$. Let $\{c_1,\cdots,c_m\}$ be the orbit of $c$, $h_i$ be an element in $G$ such that $h_i(c)=c_i$, and $\{D_1,\cdots,D_m\}$ be a collection of embedded disjoint disks in $V$ such that $\partial D_i=c_i$ for $1\leq i\leq m$.

Since $V$ is irreducible, each extension $h'$ in $Aut(V)$ can be deformed to preserve the collection of disks $\mathcal D$ by an isotopy fixing $\partial_e V$. We can deform $h'$ extending the $G$-action onto a regular neighborhood $U$ of the union of $\partial_e V$ and $\mathcal D$. On $\overline{V-U}$ we can use results about 3-balls and $I$-bundles, or repeat the above procedure. The proof can be finished by induction on the genus of the external boundary.
\end{proof}

\begin{proof}[Proof of Lemma \ref{lem:extend bundle}]
Let $\Sigma$ denote the 0-section of the $I$-bundle.
We first consider the case that $\Sigma$ is closed and orientable, then $Z=\Sigma\times[0,1]$. We will follow the argument in \cite{Bo1}.
For each element $g\in G$, let $\rho_i(g): \Sigma\to \Sigma$ be the map induced by the restriction of $g$ on $\Sigma\times\{i\}, i=0,1$. Then $\rho_0,\rho_1$ are two representations of $G$ into $Aut(\Sigma)$.
Since $G$ is finite, we can equip $\Sigma$ with a conformal structure $m_i$ for which the $\rho_i(G)$ action is conformal, $ i=0,1$. By Teichm\"uller theory, given any homeomorphism $f$ of $\Sigma$, there is a unique quasi-conformal map $\tau_f$ from the Riemann surface $(\Sigma,m_0)$ to the Riemann surface $(\Sigma,m_1)$ with constant dilatation, such that $\tau_f$ is isotopic to $f$. Since $m_i$ is preserved by $\rho_i(g)$, and since $\rho_0(g)$ and $\rho_1(g)$ are isotopic, it follows from the uniqueness of the Teichm\"uller mapping $\tau_f$ that
\[
\tau_{\mathrm{id}}\rho_0(g)=\tau_{\rho_0(g)}=\tau_{\rho_1(g)}=\rho_1(g)\tau_{\mathrm{id}}.
\]
Thus $\tau_{\mathrm{id}}$ realizes a conjugacy from $\rho_0(g)$ to $\rho_1(g)$ for each $g\in G$, and is isotopic to the identity. Let $R_t:\Sigma\to \Sigma$ be the isotopy with $R_0=\mathrm{id}$, $R_1=\tau_{\mathrm{id}}$. We define
\[
\phi(g)(x,t)=(R_t\rho_0(g)R_t^{-1})(x), \quad x\in\Sigma, \quad t\in[0,1],
\]
then $\phi: G\to Aut(Z)$ is what we need.

If $\Sigma$ is non-orientable, $Z$ is the orientable twisted $I$--bundle over $\Sigma$, and $Z$ is doubly covered by $\overline Z=(\partial Z)\times[0,1]$. Each $h\in G$ lifts to two maps $h_1,h_2\in Aut(\partial\overline Z)$. Let $\widetilde G<Aut(\partial\overline Z)$ be the group consisting of all such lifts, then every element in $\widetilde G$ extends over $\overline Z$ by lifting the corresponding $h'$. We can apply the case in the last paragraph to get a monomorphism $\widetilde G\to Aut(\overline Z)$. Modulo the covering involution, we can get our $\phi$.

If $\partial\Sigma$ is nonempty, then $\partial Z$ is the union of $\partial_I Z$ and the $I$-bundle over $\partial\Sigma$. We consider the double of $Z$ along this $I$-bundle. This leads to the previous case.
\end{proof}

\begin{lemma}\label{lem:extend hyp}
Let $Z$ be a compact orientable $3$--manifold, $\partial_0Z$ be the union of the torus components of $\partial Z$, $A\subset \partial_1 Z=(\partial Z)\setminus(\partial_0Z)$ be a collection of annuli. Suppose that $Z^*=Z\setminus (A\cup\partial_0Z)$ has a complete hyperbolic metric with (possibly empty) totally geodesic boundary and with finite volume.
Let $F$ be $(\partial_1 Z)\setminus A$ or $\partial_1Z$, and
let $G$ be a finite subgroup of $Aut(F)$, such that each element $h$ of $G$ is extendable over $Z$. Then there exists a group monomorphism $\phi:G\rightarrow Aut(Z)$ such that the restriction of $\phi(h)$ on $F$ is $h$.
\end{lemma}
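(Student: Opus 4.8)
The plan is to treat first the case $F=(\partial_1 Z)\setminus A$, in which $F$ is the totally geodesic boundary $\partial Z^*$ of the hyperbolic manifold $Z^*$; the case $F=\partial_1 Z$ with $A\neq\emptyset$ is reduced to this one by passing to the double of $Z$ along $A$, exactly as in the doubling step at the end of the proof of Lemma \ref{lem:extend bundle}, and the case $F=\emptyset$ is trivial. So assume $F=\partial Z^*\neq\emptyset$, and fix the complete finite-volume hyperbolic metric on $Z^*$, with induced metric $m_1$ on $\partial Z^*$. Given $h\in G$, choose any extension $h'\in Aut(Z)$ with $h'|_F=h$. Because $h'$ preserves $\partial_1 Z$ and $F$, it preserves $A=(\partial_1 Z)\setminus F$ and all the torus components $\partial_0 Z$, hence restricts to an element $h'|_{Z^*}\in Aut(Z^*)$; by Theorem \ref{thm:extend hyp} there is a unique isometry $\psi(h)\in Aut(Z^*)$ isotopic to $h'|_{Z^*}$.

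Next I would check that $\psi(h)$ is independent of the chosen extension and that $\psi$ is a homomorphism. Two extensions of $h$ differ by some $g\in Aut(Z)$ with $g|_F=\mathrm{id}$; then $g|_{Z^*}$ fixes the nonempty totally geodesic boundary $\partial Z^*$ pointwise, so the unique isometry isotopic to $g|_{Z^*}$ restricts to an isometry of $\partial Z^*$ isotopic to $\mathrm{id}_{\partial Z^*}$, hence equal to it; and an orientation-preserving isometry of the connected hyperbolic manifold $Z^*$ fixing a nonempty totally geodesic boundary component pointwise is the identity. Thus $g|_{Z^*}$ is isotopic to $\mathrm{id}$ and $\psi$ is well defined. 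For $h_1,h_2\in G$, the product $h_1'h_2'$ is an extension of $h_1h_2$, so $\psi(h_1)\psi(h_2)$ — an isometry isotopic to $(h_1'h_2')|_{Z^*}$ — equals $\psi(h_1h_2)$ by uniqueness. Hence $\psi:G\to Aut(Z^*)$ is a homomorphism into the isometry group; but in general $\psi(h)|_F$ is only isotopic to $h$, not equal to it, so a correction near $F$ is needed.

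To correct this I would import the Teichm\"uller-theoretic device from the proof of Lemma \ref{lem:extend bundle}. Restricting $\psi$ to $\partial Z^*$ gives a homomorphism $h\mapsto\bar\psi(h):=\psi(h)|_{\partial Z^*}$ into $\mathrm{Isom}(\partial Z^*,m_1)$ with $\bar\psi(h)\simeq h$ in $F$. Uniformizing the quotient orbifold $F/G$ produces a complete finite-area hyperbolic metric $m_0$ on $F$, with the same cusps as $m_1$, for which the given subgroup $G<Aut(F)$ acts by isometries. Let $R\colon(F,m_0)\to(F,m_1)$ be the Teichm\"uller map isotopic to $\mathrm{id}_F$. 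For each $h$, both $R\circ h$ and $\bar\psi(h)\circ R$ are quasiconformal maps $(F,m_0)\to(F,m_1)$ isotopic to $h$ with the same constant dilatation as $R$ — since $h$ is $m_0$-conformal and $\bar\psi(h)$ is $m_1$-conformal — hence both are the Teichm\"uller map in the class of $h$, so $R\circ h=\bar\psi(h)\circ R$. Replacing $R$ by $R^{-1}$, we obtain a homeomorphism $R$ of $F$ isotopic to $\mathrm{id}_F$ with $R\,\bar\psi(h)\,R^{-1}=h$ for all $h\in G$. Extend $R$ by the identity on $A\cup\partial_0 Z$ and then over a collar of $\partial Z$ in $Z$ to $\tilde R\in Aut(Z)$, supported in that collar, with $\tilde R|_F=R$. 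Let $\overline{\overline{\psi}}(h)\in Aut(Z)$ be the natural extension of the isometry $\psi(h)$ to the compactification $Z$ of $Z^*$ (an isometry permutes the cusps and extends over the cusp cross-sections $A$, $\partial_0 Z$); since $\psi$ is a homomorphism, so is $h\mapsto\overline{\overline{\psi}}(h)$, and, as $Z$ and $Z^*$ coincide near $F=\partial Z^*$, we have $\overline{\overline{\psi}}(h)|_F=\bar\psi(h)$. Now set $\phi(h):=\tilde R\circ\overline{\overline{\psi}}(h)\circ\tilde R^{-1}$. Then $\phi$ is a homomorphism, being conjugate to $h\mapsto\overline{\overline{\psi}}(h)$; it restricts to $R\,\bar\psi(h)\,R^{-1}=h$ on $F$; and it is injective because $\phi(h)|_F=h$. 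This is the required $\phi$.

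I expect the genuine difficulty to lie not in any single step but in the bookkeeping at the boundary $F$: one must first upgrade the element-wise extension hypothesis to an honest homomorphism on the hyperbolic bulk (via the uniqueness clause of Theorem \ref{thm:extend hyp}), and then reconcile this geometric group action on $Z^*$ with the prescribed topological action on $F$ so that the final map is simultaneously a group homomorphism and \emph{exactly} $h$ on $F$ — which is precisely what forces the Teichm\"uller-uniqueness argument of Lemma \ref{lem:extend bundle} into the collar. A secondary point is the reduction of the case $F=\partial_1 Z$ by doubling along $A$: this needs that $A$, being the annular part of the characteristic pared locus, is determined up to isotopy by the hyperbolic structure on $Z^*$, so that $G$ can be arranged to preserve it before doubling.
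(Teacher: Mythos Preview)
Your overall strategy---Mostow rigidity on $Z^*$ to obtain an isometric homomorphism $\psi$, then a Teichm\"uller conjugation in a collar of $F$ to correct $\psi|_F$ back to the prescribed $G$---is exactly the mechanism the paper uses. The paper simply packages your collar-correction as an invocation of Lemma~\ref{lem:extend bundle}: it writes $Z=Z^{\#}\cup_{F\times\{1\}}(F\times[0,1])$, puts the isometric action $\psi^{\#}$ on $Z^{\#}$, observes that each $h\in G$ then extends (non-homomorphically) over the product collar, and appeals to Lemma~\ref{lem:extend bundle} on $F\times[0,1]$ to upgrade this to a group action. Your explicit conjugator $\tilde R$ is precisely what that lemma produces internally. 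So on the main line the two proofs coincide.

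There is, however, a genuine flaw in your proposed reduction of the case $F=\partial_1 Z$ with $A\neq\emptyset$. Doubling $Z$ along $A$ does \emph{not} yield a manifold to which your first case applies: the annuli $A$ become interior incompressible tori in the double, so $(DZ)\setminus\partial_0(DZ)$ is no longer hyperbolic, and the hypothesis of the lemma fails for $DZ$. (This is unlike the doubling in Lemma~\ref{lem:extend bundle}, where the double of an $I$--bundle along its vertical annuli is again an $I$--bundle.) The fix is not to double at all: the paper's collar argument works uniformly for both choices of $F$, and so does yours. For $F=\partial_1 Z$ one first uses that the pared locus $A\cup\partial_0 Z$ is determined up to isotopy by the hyperbolic structure on $Z^*$ (a point you already noted), so each extension $h'$ may be isotoped to preserve $A$ and hence $Z^*$; Mostow then gives $\psi$, its extension $\overline{\overline\psi}$ acts on all of $\partial_1 Z$, and the Teichm\"uller conjugation on the \emph{closed} surface $F=\partial_1 Z$ (the easiest instance of the argument) finishes exactly as in your first case. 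Once you replace the doubling step by this direct treatment, your proof is correct and essentially identical to the paper's.
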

\begin{proof}
We present $Z$ as a union $Z^{\#}\cup_{F\times\{1\}}F\times[0,1]$, where $Z^{\#}$ is homeomorphic to $Z$, and $F\subset \partial Z$ is identified with $F\times\{0\}$.
Given $h\in G$, let $h'\in Aut(Z)$ be its extension over $Z$. By Theorem~\ref{thm:extend hyp}, there exists a unique isometry $\psi(h)$ of $Z^*$ in the isotopy class of $h'$. Let $\psi^{\#}(h)$ be the corresponding automorphism of $Z^{\#}$, then $\psi^{\#}$ preserves $F\times\{1\}$.
Since $\psi(h)$ is isotopic to $h'$, there exists a map $h''\in Aut(F\times[0,1])$, such that $h''|F\times\{0\}=h$ and $h''|F\times\{1\}=\psi^{\#}(h)|F\times\{1\}$. By Lemma~\ref{lem:extend bundle}, we can extend the $G$--action on $F\times\{0,1\}$ over $F\times[0,1]$, thus we get a $G$--action over $Z$ which extends $G<Aut(F)$.
\end{proof}



\begin{proof}[Proof of Theorem \ref{thm:stronger}]
The case when $g=0$ is clear. Below we assume that $g\geq 1$.

The surface $e(\Sigma_g)$ is two-sided in $M$. We first cut $M$ along $e(\Sigma_g)$ and choose a connected component of the resulting manifold, denoted by $X$, which is clearly $\Sigma_1$-splittible. Since $G$ preserves the two sides of $e(\Sigma_g)$, $G$ naturally embeds into $Aut(\partial X)$ and every element in $G$ is extendable over $X$. For each such component $X$, we will modify it as in the paragraph after Lemma~\ref{lem:h-l h-d}, such that $G$ is extendable over it. The manifold $M'$ will be obtained when the boundaries of the modified components are glued back to $e(\Sigma_g)$. Then $G$ is extendable over $M'$ which will satisfy $M\succeq M'$.

If $X$ admits a nontrivial prime decomposition, then by Lemma \ref{lem:sphere dec}, for each $h$ in $G$ there exists an element $h'$ in $Aut(X)$ such that $h'\circ e=e\circ h$ and $h'$ preserves the decomposition spheres. Note that $\partial X$ lies in a single prime factor, because $M$ is $\Sigma_0$-splittable. We replace $X$ by the prime factor containing $\partial X$, which is irreducible, and still use $X$ to denote it. The new $X$ is still $\Sigma_1$-splittible.

Then by Theorem \ref{thm:disk dec}, we have a compression body $V$ in $X$. For each $h$ in $G$ we can assume that its corresponding element $h'$ in $Aut(X)$ preserves $V$. Note that $V$ is connected if and only if $\partial_e V=\partial X$ is connected, and $G$ acts on each component of $\partial_e V$. Then by Lemma \ref{lem:extend comp}, we can further deform $h'$ such that the map given by $h\mapsto h'|_V$ from $G$ to $Aut(V)$ is a group monomorphism. Namely $G$ is extendable over $V$. We need to extend the action of $G$ further onto  $\overline{X-V}$. 

Let $Y$ be a component of $\overline{X-V}$, and let $G_Y$ be the stable subgroup of $\partial Y$.
Since $M$ is $\Sigma_1$-splittible, one can prove the following property: 
\begin{equation}\label{eq:TorusInY}
\begin{array}{c}
\text{Every torus in the interior of $Y$ splits $Y$ into two parts,} \\
\text{such that $\partial Y$ is contained in one part.}
\end{array}
\end{equation}

\noindent{\bf Claim.} There exists a (possibly empty) collection $\mathcal T^*=T_1\cup\cdots\cup T_n$ of disjoint tori in $Y$, such that 
the components of
$Y\setminus \mathcal T^*$ are $Y_0, Y_1\dots Y_n$ with $\partial Y\subset Y_0$ and $T_i=\partial Y_i$.
Moreover, there exists a monomorphism $\phi: G_Y\to Aut(Y_0)$, such that $\phi(h)|(\partial Y)=h$ for each $h\in G_Y$, and $\phi(h)$ extends to an automorphism of $Y$.

If the above claim holds, we can replace each $Y_i$, $i>1$, by a solid torus as in Definition~\ref{def:partial order}. Since $\phi(h)$ extends to an automorphism of $Y$ for each $h$,
$\phi(G_Y)$ is extendable over the collection of the new solid tori. Then our theorem holds.
In the rest of the proof, we will prove the above claim.

By (\ref{eq:TorusInY}), either $\partial Y$ is a torus or every connected component of $\partial Y$ has genus bigger than one. If $\partial Y$ is a torus, let $\mathcal T^*$ consist of one parallel copy of $\partial Y$, and the claim is obvious.

Below we assume that every connected component of $\partial Y$ has genus bigger than one. By Theorem~\ref{thm:disk dec}, $Y$ is irreducible and boundary irreducible. By Theorem~\ref{thm:torus dec}, there is a collection of tori and annuli $\mathcal{T}$ cutting $Y$ into pieces of certain types.

First of all, suppose that $\mathcal{T}$ contains no annulus. For each $h$ in $G_Y$ we can assume that its corresponding element $h'$ in $Aut(Y)$ preserves $\mathcal{T}$. By (\ref{eq:TorusInY}), $\partial Y$ belongs to a single piece $Z$ of $Y\setminus\mathcal{T}$, which can not be a Seifert manifold. If $Z$ is an $I$-bundle over a closed surface, then it equals $Y$, and $G_Y$ is extendable over it by Lemma \ref{lem:extend bundle}. Otherwise, after removing the boundary tori, $Z$ admits a complete hyperbolic structure with totally geodesic boundary and with finite volume. By Lemma~\ref{lem:extend hyp}, $G_Y$ is extendable over $Z$. 
By (\ref{eq:TorusInY}), every component $T$ of $\mathcal T$ bounds another piece $Z'$. 
We can let $Y_0=Z$, and let $\mathcal T^*=(\partial Y_0)\setminus(\partial Y)$. Then our claim is proved in this case.

Secondly, suppose that $\mathcal{T}$ contains annuli. Let $\mathcal{A}$ be the union of the annuli. Then $\partial\mathcal{A}$ is a collection of disjoint embedded essential circles in $\partial Y$, and for each $h$ in $G_Y$ the two collections $\partial\mathcal{A}$ and $h(\partial\mathcal{A})$ are isotopic. Hence the annuli in $\mathcal{T}$ can be chosen such that $\partial\mathcal{A}$ is preserved by $G_Y$, which can be shown by using a $G_Y$-invariant hyperbolic structure on $\partial Y$. Then for each $h$ in $G_Y$ we can assume that its corresponding element $h'$ in $Aut(Y)$ preserves $\mathcal{T}$. 

Let $\mathcal P$ be the collection of the pieces in $Y\setminus\mathcal{T}$ which are $I$--bundles over surfaces of negative Euler characteristic, and $\mathcal H$ be the collection of the pieces in $Y\setminus\mathcal{T}$ which are hyperbolic with nonempty totally geodedic boundary. Suppose $Z\in\mathcal P\cup \mathcal H$, then $\partial Z$ contains a component of genus $>1$, which  must intersect $\partial Y$. Let $G_Z$ be the stable subgroup of $Z\cap\partial Y$. 

If $Z\in \mathcal P$, by Lemma~\ref{lem:extend bundle}, $G_Z$ is extendable over $Z$, and the extension of each $h\in G_Z$ over $Z$ is unique up to isotopy. 
Moreover, suppose $A\subset\mathcal T$ is an annulus, then on at most one side of $A$ there is a piece in $\mathcal P$, otherwise we could extend the $I$--bundle structures across $A$ to get bigger $I$--bundles. Let $Y_P$ be the union of all pieces in $\mathcal P$ and a tubular neighborhood of $\partial Y$, then $G_Y$ is extendable over $Y_P$. 

Let $Y'=\overline{Y\setminus Y_P}$, and let $\mathcal A'$ be the union of annuli in $\mathcal A$ which are not in $Y_P$. Let $A$ be a component of $\mathcal A'$, and let $Z\in\mathcal H$ be  adjacent to $A$. By Lemma~\ref{lem:extend hyp}, $G_Z$ can be extended over $Z$. In particular, $G_Z$ can be extended over the union of the annuli bounded by the $G_Z$--orbit of $\partial A$. In this way, we can inductively extend $G_Y$ over $\mathcal A'$. 

Suppose $Z'$ is a piece of $\mathcal H$ and $T$ is a torus component of $\partial Z'$, then $T$ does not contain any component of $\mathcal A'$. Otherwise, $Z'$ would have a totally geodesic annulus boundary comoponent, which is impossible. Hence $T$ is not parallel to any torus in $(\partial Y)\cup\mathcal A'$, on which $G_Y$ now acts.
Using Lemma~\ref{lem:extend hyp} again, we can extend $G_Y$ over all pieces of $\mathcal H$. Now let $Y_0$ be the union of the pieces in $\mathcal P\cup\mathcal H$ and a neighborhood of $\partial Y$, and let $T^*=(\partial Y_0)\setminus(\partial Y)$. The claim is thus proved.
\end{proof}

In what follows, we discuss some generalizations of Theorem \ref{thm:stronger}.

1. We can replace $\Sigma_g$ by any compact connected 3-manifold $N$. The extendable automorphisms and extendable subgroups in $Aut(N)$ can be defined similarly as the surface case. The proof is almost the same, except that $X$ should be a connected component of $\overline{M-N}$ now, and we consider the stable subgroup of $\partial X$ in $G$.

We can also replace $\Sigma_g$ by any compact connected surface. Let $\Sigma$ denote such an oriented surface. We can first extend the action of $G$ onto an oriented $I$-bundle over $\Sigma$ embedded in $M$, then extend the action of $G$ onto a regular neighborhood of $\Sigma$. This leads to the case of compact 3-manifolds.

For a compact connected nonorientable surface $\Pi$, let $Aut(\Pi)$ denote the group of automorphisms of $\Pi$, then we can define extendable automorphisms and extendable subgroups in $Aut(\Pi)$ similarly. Since $M$ is oriented, the surface $e(\Pi)$ is one-sided. In $M$ we can choose an oriented $I$-bundle $Z$ over $\Pi$, and assume that for each $h$ in $G$ the corresponding element $h'$ preserves $Z$ and the bundle structure. Note that the $\partial I$-bundle $\partial_I Z$ is the two-sheeted orientable cover of $\Pi$. Each $h$ has two lifts in $Aut(\partial_I Z)$, and they differ by the covering transformation. Because $h'$ preserves the two sides of $\partial_I Z$, its restriction on $\partial_I Z$ is the orientation-preserving lift of $h$. Then the action of $G$ can extend onto a regular neighborhood of $\Pi$.

Clearly we can also replace $\Sigma_g$ by $S^1$ or $I$. Hence we have the following result.

\begin{theorem}\label{thm:compact case}
Given a $\Sigma_1$-splittable $M$ in $\mathcal{M}$, a compact connected manifold $N$, and a finite subgroup $G$ in $Aut(N)$, if there is an embedding $e: N\rightarrow M$ such that each element of $G$ is extendable over $M$ with respect to $e$, then there exists $M'$ in $\mathcal{M}$ such that $M\succeq M'$ and $G$ is extendable over $M'$.
\end{theorem}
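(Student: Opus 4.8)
The plan is to reduce the statement to the case where $N$ is a compact connected 3-manifold, and then to run the proof of Theorem~\ref{thm:stronger} on the complementary pieces; the only genuinely new feature is that $G$ may now permute those pieces, so the modifications have to be organized $G$-equivariantly.

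First I would dispose of the low-dimensional cases. Suppose $N$ is a compact surface, or $N=S^1$, or $N=I$. A regular neighbourhood of $e(N)$ in $M$ is an oriented $I$-bundle $Z$ over $N$ with $e(N)$ as its $0$-section. If $N$ is orientable (in particular if $N=S^1$ or $N=I$), then $e(N)$ is two-sided, $Z\cong N\times[0,1]$, the assignment $h\mapsto h\times\mathrm{id}_{[0,1]}$ is a monomorphism $G\to Aut(Z)$ extending $G<Aut(N)$, and after isotoping each extension $h'\in Aut(M)$ so that it preserves $Z$ we have replaced $N$ by the 3-manifold $Z$. If $N=\Pi$ is nonorientable, then $e(\Pi)$ is one-sided, $Z$ is the orientable twisted $I$-bundle over $\Pi$, and $\partial_I Z=\partial Z$ is the orientable double cover of $\Pi$; each $h\in G$ has exactly two lifts to $Aut(\partial_I Z)$, differing by the covering involution, and since $h'$ preserves the two sides of $\partial_I Z$ inside $Z$ its restriction to $\partial_I Z$ is the orientation-preserving lift, so $h\mapsto h'|_{\partial_I Z}$ is a homomorphism; passing to the double cover $(\partial Z)\times[0,1]$ of $Z$ and descending modulo the covering involution, as in the proof of Lemma~\ref{lem:extend bundle}, we extend $G$ over $Z$ and reduce again to the 3-manifold case.

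So assume $N$ is a compact connected 3-manifold embedded in $M$ via $e$, with $\partial N\neq\emptyset$ (otherwise $N=M$ and there is nothing to prove). Write $\overline{M-e(N)}=X_1\cup\cdots\cup X_k$ for the connected components. The relation $h'\circ e=e\circ h$ pins down $h'$ on $e(N)$, and any automorphism of $\overline{M-e(N)}$ fixing $e(\partial N)$ pointwise preserves each $X_j$; hence the component $h'(X_j)$ depends only on $h$, and $G$ acts on $\{X_1,\dots,X_k\}$ through a homomorphism to a symmetric group. Each $X_j$ is $\Sigma_1$-splittable because $M$ is. I would pick a representative $X_j$ in each $G$-orbit and work with its stabiliser $G_j\leq G$, which acts on $\partial X_j$ compatibly with the given $G$-action on $e(\partial N)$, every element being extendable over $X_j$. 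To this pair the proof of Theorem~\ref{thm:stronger} applies essentially verbatim: kill the prime decomposition by Lemma~\ref{lem:sphere dec} and pass to the irreducible prime factor containing $\partial X_j$, using that $X_j$ is $\Sigma_0$-splittable by Proposition~\ref{pro:split order}; strip off the compression body of Theorem~\ref{thm:disk dec} and extend $G_j$ over it by Lemma~\ref{lem:extend comp}; and on each component $Y$ of the complement of the compression body, run the torus/annulus analysis via property~\eqref{eq:TorusInY}, Theorem~\ref{thm:torus dec}, and Lemmas~\ref{lem:extend bundle} and~\ref{lem:extend hyp} to obtain tori $\mathcal T^*$ cutting $Y$ into $Y_0,Y_1,\dots,Y_n$ with $\partial Y\subset Y_0$, a monomorphism $\phi\co G_Y\to Aut(Y_0)$ restricting to the given action on $\partial Y$ and extending over $Y$, and then replace each $Y_i$ with $i\geq 1$ by a solid torus as in Definition~\ref{def:partial order}. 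This turns $X_j$ into a manifold $\widehat X_j$ carrying a $G_j$-action that extends its action on $\partial X_j$. Transporting $\widehat X_j$ and this action over the rest of the orbit by means of chosen extensions, and regluing everything to $e(N)$, produces an $M'\in\mathcal M$ with $M\succeq M'$ on which $G$ acts; hence $G$ is extendable over $M'$.

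The step I expect to require the most care is precisely this equivariant organization. In the surface case an orientation argument forces every extension to preserve each complementary component, so the pieces are modified independently; here the extensions may shuffle $X_1,\dots,X_k$, so one must modify a whole $G$-orbit coherently---essentially inducing an action of $G$ from the stabiliser data on the orbit representatives---and check that the local automorphisms of the $\widehat X_j$ patch together with the $G$-action on $e(N)$ into a single monomorphism $G\to Aut(M')$. Once this transfer is in place, every geometric ingredient needed is already present in the proof of Theorem~\ref{thm:stronger}, and no new 3-manifold-theoretic input is required.
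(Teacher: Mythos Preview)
Your proposal is correct and follows essentially the same approach as the paper: reduce the low-dimensional cases to the 3-manifold case by passing to an oriented $I$-bundle regular neighbourhood (with the orientation-preserving-lift argument in the nonorientable case), and for a 3-manifold $N$ run the proof of Theorem~\ref{thm:stronger} on each component $X$ of $\overline{M-e(N)}$ using the stabiliser of $\partial X$ in $G$. The only point you elaborate beyond the paper is the equivariant transport of the modified piece $\widehat X_j$ over its $G$-orbit, which the paper leaves implicit in the phrase ``consider the stable subgroup of $\partial X$''; your induced-action description is the right way to make this precise.
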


2. Theorem \ref{thm:compact case} will still hold if we add orientation-reversing automorphisms into $Aut(N)$ and $Aut(M)$ when we define ``extendable''. Note that the results listed after Theorem \ref{thm:stronger} are still valid when we add orientation-reversing elements into the groups. In the case of compact 3-manifolds, the proof is also valid. In the cases of compact surfaces, we need more discussions.

For the oriented surface $\Sigma$, we can define a map $\rho: G\rightarrow\mathbb{Z}_2$ such that $\rho(h)=0$ if and only if the element $h'$ corresponding to $h$ preserves the two sides of $e(\Sigma)$. For the nonorientable surface $\Pi$, we can define a map $\rho: G\rightarrow\mathbb{Z}_2$ such that $\rho(h)=0$ if and only if the element $h'$ corresponding to $h$ preserves the orientation of $M$. Note that for $\Pi$ we have an oriented $I$-bundle $Z$ over it, $h'$ is orientation-preserving on $M$ if and only if its restriction on $\partial_I Z$ is orientation-preserving.

If $\rho$ is a group homomorphism, then the action of $G$ can extend onto a regular neighborhood of $e(\Sigma)$ (resp. $e(\Pi)$). This leads to the case of compact 3-manifolds. Otherwise, the identity on $\Sigma$ (resp. $\Pi$) can extend to an automorphism of $M$ that exchanges the two sides (resp. reverses the orientation). Then every element in $G$ can extend to an automorphism of $M$ that preserves the two sides (resp. preserves the orientation), and the proof can be finished as the previous case.

For periodic automorphisms, similar arguments can show the following theorem, where when $N$ is a surface, it is possible that $f$ has odd order while $f'$ reverses the orientation of $M$, in which case $h$ has order twice the order of $f$.

\begin{theorem}\label{thm:periodic case}
If a periodic automorphism $f$ of a compact connected manifold $N$ extends to an automorphism $f'$ of a $\Sigma_1$-splittable $M$ in $\mathcal{M}$, then there exists $M'$ in $\mathcal{M}$ such that $M\succeq M'$, $f$ can extend to a periodic automorphism $h$ of $M'$, and $h$ preserves the orientation of $M'$ if and only if $f'$ preserves the orientation of $M$.
\end{theorem}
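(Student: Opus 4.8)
The plan is to reduce Theorem~\ref{thm:periodic case} to Theorem~\ref{thm:compact case} (together with its orientation-reversing refinement discussed in point~2 above) by passing from the single periodic automorphism $f$ to the cyclic group it generates, being careful about how the orientation behaviour of the extension $f'$ interacts with the side/orientation-exchanging phenomenon already described. First I would set $G=\langle f\rangle<Aut(N)$; by hypothesis $f$ is periodic, so $G$ is a finite cyclic group, and $f'\in Aut(M)$ (allowing orientation-reversing automorphisms of $M$ now, as in point~2) extends $f$ with respect to $e$. The subtlety is that $f'$ need not have the same order as $f$, so $\langle f'\rangle$ need not map onto $\langle f\rangle$ with cyclic kernel in the naive way; however, $f'$ still generates a finite \emph{cyclic} subgroup of $Aut(M)$ (a periodic extension can be chosen, e.g.\ by averaging a metric, or directly: $f'$ has some power equal to the identity since $M$ is a closed $3$--manifold and $f$ has finite order, and if no power is the identity one replaces $f'$ by an isotopic finite-order map). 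So every element of $G$ is extendable over $M$ with respect to $e$, and Theorem~\ref{thm:compact case} (in its version allowing orientation-reversing maps) produces $M'\in\mathcal M$ with $M\succeq M'$ and a monomorphism $\phi:G\to Aut^\pm(M')$ with $\phi(h)\circ e'=e'\circ h$.

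Next I would track the orientation/side bookkeeping to pin down the order of the resulting automorphism $h:=\phi(f)$ of $M'$ and prove the ``if and only if'' clause. Here I follow the argument given in point~2 of the excerpt for surfaces: one defines $\rho:G\to\mathbb Z_2$ by $\rho(g)=0$ iff the chosen extension of $g$ preserves the two sides of $e(\Sigma)$ (when $N=\Sigma$ is an orientable surface), or $\rho(g)=0$ iff the extension preserves the orientation of $M$ (when $N=\Pi$ is nonorientable, or when $N$ is a $3$--manifold). If $\rho$ is a homomorphism then the $G$--action extends over a regular neighbourhood of $e(N)$ and we land directly in the compact $3$--manifold case, producing $h$ of the \emph{same} order as $f$ with $h$ orientation-preserving on $M'$ exactly when $f'$ is orientation-preserving on $M$. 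If $\rho$ fails to be a homomorphism — which, since $G$ is cyclic, means $\rho(f)=1$ and $f$ has odd order — then, as in the excerpt, the identity of $N$ extends to an automorphism of $M$ that exchanges the two sides (resp.\ reverses orientation), every element of $G$ extends to a \emph{side-preserving} (resp.\ orientation-preserving on $M$) automorphism, and the composite $h$ of $f$ with such an extension has order twice that of $f$; one checks that this is the only way the order can double, matching the parenthetical remark in the statement. In all cases, since $M\succeq M'$ and $M'$ is built from pieces of $M$ with explicit degree-one collapses (Proposition~\ref{pro:order imply}), the orientation of $M'$ is inherited from that of $M$, so ``$h$ preserves the orientation of $M'$'' is literally the condition ``$h$ restricted to the relevant piece preserves the inherited orientation'', which by construction equals ``$f'$ preserves the orientation of $M$''.

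Finally I would note that periodicity of $h$ is automatic: $\phi$ is a group monomorphism from the finite cyclic group $G$ (or its index-two extension in the exceptional case) into $Aut(M')$, so $h=\phi(f)$ has finite order; and $\phi(h)\circ e'=e'\circ h$ on the regular neighbourhood shows $h$ genuinely extends $f$. Thus $M'$, $h$ are as required.

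I expect the main obstacle to be the orientation/side bookkeeping in the exceptional case where $\rho$ is not a homomorphism. One must verify carefully that when $f$ has odd order and $\rho(f)=1$, the extension $f'$ necessarily has even order (twice that of $f$), that this doubling is unavoidable, and that the construction of the ``side-preserving'' extensions of all powers of $f$ is consistent (i.e.\ really yields a group action, not just individually extendable elements) — this is exactly the point where one must invoke that the identity-component of the relevant neighbourhood automorphism group contains a side-exchanging involution and combine it coherently with the $G$--action coming from Theorem~\ref{thm:compact case}. The rest is routine application of the already-established Theorem~\ref{thm:compact case} and its orientation-reversing variant.
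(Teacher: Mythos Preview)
Your overall strategy---reduce to the cyclic group $G=\langle f\rangle$ and invoke Theorem~\ref{thm:compact case} in its orientation-reversing form---is exactly the route the paper has in mind; the paper's own ``proof'' is the single sentence ``similar arguments can show the following theorem.'' Two points need correction, however.

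First, the parenthetical claiming that $f'$ can be taken periodic (``by averaging a metric'' or ``replaces $f'$ by an isotopic finite-order map'') is both false and unnecessary. An arbitrary automorphism of a closed $3$--manifold need not be isotopic to a periodic one, and nothing in the argument requires $f'$ itself to have finite order: all you use is that $f^k$ is extendable over $M$ via $(f')^k$, which is automatic.

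Second, and more seriously, your handling of the exceptional case ($f$ of odd order $n$, $f'$ orientation-reversing) has a gap. You apply the orientation-reversing version of Theorem~\ref{thm:compact case} to $G\cong\mathbb Z_n$; but as the discussion in point~2 makes explicit, when $\rho$ fails to be a homomorphism the conclusion is that every element of $G$ extends \emph{orientation-preservingly}, so the resulting monomorphism $\phi:G\to Aut(M')$ lands in the orientation-preserving subgroup. Your $\phi(f)$ then has order $n$ and preserves orientation, contradicting the ``if and only if'' clause you are trying to prove. The sentence ``the composite $h$ of $f$ with such an extension has order twice that of $f$'' does not repair this: there is no orientation-reversing involution of $M'$ available to compose with, and the later allusion to ``its index-two extension'' is not backed by any construction. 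The clean fix is to pass to a regular neighbourhood $U$ of $e(N)$ \emph{before} invoking Theorem~\ref{thm:compact case}: the restriction of $f'$ to $U$ is isotopic rel $e(N)$ to a periodic automorphism $\tilde f$ of $U$ of order $\mathrm{lcm}(n,2)=2n$ (for $N=\Sigma$ orientable, take $\tilde f(x,t)=(f(x),-t)$ on $\Sigma\times[-1,1]$; the nonorientable case is similar via the $\partial I$--bundle). Now apply Theorem~\ref{thm:compact case} to the cyclic group $\langle\tilde f\rangle\cong\mathbb Z_{2n}$ acting on the compact connected $3$--manifold $U$, each of whose elements is extendable over $M$ by the corresponding power of $f'$. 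Here $\rho$ \emph{is} a homomorphism (the generator $\tilde f$ goes to $1$, and $2n$ is even), so the monomorphism $\phi:\mathbb Z_{2n}\to Aut^{\pm}(M')$ produced by the theorem sends $\tilde f$ to an orientation-reversing $h$ of order $2n$ that restricts to $f$ on $e'(N)$. This is what the paper's parenthetical remark is pointing to.
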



\section{Classification of extendable cyclic actions}\label{sec:cyclic}
In this section, we first prove Theorem \ref{thm:cyclic classify}. According to the theory of orbifolds, which is developed and carefully discussed in \cite{Th} and \cite{BMP}, two subgroups $G$ and $G'$ in $Aut(\Sigma_g)$ are conjugate to each other by an automorphism $f$ of $\Sigma_g$ if and only if there is an isomorphism $\eta:G\rightarrow G'$ and a homeomorphism $\tau:\Sigma_g/G\rightarrow\Sigma_g/{G'}$ such that $\eta\circ\psi=\psi'\circ\tau_*$, where $\psi$ and $\psi'$ are the epimorphisms corresponding to $G$ and $G'$ respectively and $\tau_*$ is the induced homomorphism $\pi_1(\Sigma_g/G)\rightarrow\pi_1(\Sigma_g/{G'})$. Moreover, $f$ is orientation-preserving if and only if $\tau$ is orientation-preserving.

We will write the group law in the cyclic group $\mathbb Z_m$ multiplicatively. In particular, the identity element in $\mathbb Z_m$ will be $1$. The following lemma is an easy consequence of the Chinese remainder theorem. We leave its proof to the reader.

\begin{lemma}\label{lem:cyc group}
Let $m,p,q$ be positive integers, where $(p,q)=1$. Let $h$ be a generator of the group $\mathbb{Z}_{mpq}$. Then for any elements $a,b$ in $\mathbb{Z}_{mpq}$ with orders $p,q$ respectively, there is an automorphism $\eta$ of $\mathbb{Z}_{mpq}$ such that $\eta(a)=h^{mq}$ and $\eta(b)=h^{mp}$.
\end{lemma}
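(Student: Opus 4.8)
The plan is to work explicitly with the chosen generator $h$. First I would identify the elements of $\mathbb{Z}_{mpq}$ of order $p$ and of order $q$: since $h^j$ has order $mpq/\gcd(j,mpq)$, an element has order $p$ precisely when $j=mqt$ with $\gcd(t,p)=1$, and order $q$ precisely when $j=mps$ with $\gcd(s,q)=1$. So one may write $a=h^{mqt}$ and $b=h^{mps}$ with $\gcd(t,p)=\gcd(s,q)=1$. Every automorphism of $\mathbb{Z}_{mpq}$ has the form $\eta_u\co h\mapsto h^u$ with $\gcd(u,mpq)=1$, and under such $\eta_u$ the requirements $\eta(a)=h^{mq}$ and $\eta(b)=h^{mp}$ read $umqt\equiv mq\pmod{mpq}$ and $umps\equiv mp\pmod{mpq}$; dividing by $mq$ and by $mp$ respectively, these are equivalent to the two congruences $ut\equiv 1\pmod p$ and $us\equiv 1\pmod q$.

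Next, since $\gcd(p,q)=1$, the Chinese remainder theorem supplies an integer $u_0$, unique modulo $pq$, with $u_0\equiv t^{-1}\pmod p$ and $u_0\equiv s^{-1}\pmod q$, the inverses being taken modulo $p$ and modulo $q$. As a unit mod $p$ and mod $q$, $u_0$ is automatically coprime to $pq$. It then remains to replace $u_0$ by an integer $u$ with $u\equiv u_0\pmod{pq}$ and in addition $\gcd(u,mpq)=1$; for such $u$ the map $\eta_u$ is the desired automorphism.

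For this last step I would invoke surjectivity of the reduction homomorphism $(\mathbb{Z}/mpq\mathbb{Z})^{\times}\to(\mathbb{Z}/pq\mathbb{Z})^{\times}$, valid because $pq\mid mpq$; or argue by hand with $u=u_0+kpq$: for a prime $\ell\mid m$ with $\ell\mid pq$ one has $u\equiv u_0\not\equiv 0\pmod{\ell}$ for free, while for a prime $\ell\mid m$ with $\ell\nmid pq$ the class of $u$ modulo $\ell$ runs over all residues as $k$ varies, so a single forbidden class can be dodged, and one $k$ handling all such primes simultaneously exists by a further application of the Chinese remainder theorem. All the computations here are routine; the only point demanding a little care is precisely this coprimality bookkeeping, since $m$ is arbitrary and may share prime factors with $p$ or $q$ — which is why the automatic‑coprimality observation for primes dividing $pq$ is the crucial ingredient.
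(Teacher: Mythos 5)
Your argument is correct and complete: the reduction of the two conditions to $ut\equiv 1\pmod p$, $us\equiv 1\pmod q$, the CRT step producing $u_0$, and the lift of $u_0$ to a unit modulo $mpq$ (via surjectivity of $(\mathbb{Z}/mpq\mathbb{Z})^{\times}\to(\mathbb{Z}/pq\mathbb{Z})^{\times}$) are all sound, including the coprimality bookkeeping when $m$ shares prime factors with $pq$. The paper leaves this lemma to the reader as ``an easy consequence of the Chinese remainder theorem,'' and your proof is exactly a careful write-up of that intended approach.
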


\begin{lemma}\label{lem:orb group}
Let $\mathcal{F}$ be the 2-orbifold having underlying space $\Sigma_r$ and $s$ singular points of indices $n_1,\cdots,n_s$. According to Figure \ref{fig:presentation}, $\pi_1(\mathcal{F})$ has the presentation
\[\langle \alpha_1,\beta_1,\cdots,\alpha_r,\beta_r,\gamma_1,\cdots,\gamma_s\mid \prod^r_{i=1}[\alpha_i,\beta_i]\prod^{s}_{j=1}\gamma_j=1,\gamma_k^{n_k}=1,1\leq k\leq s\rangle.\]

\begin{figure}[h]
\includegraphics{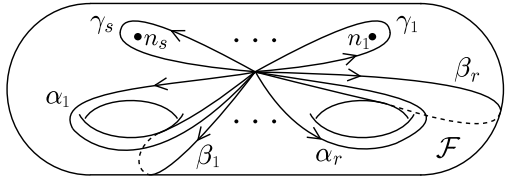}
\caption{Generators of $\pi_1(\mathcal{F})$}\label{fig:presentation}
\end{figure}

Let $\psi:\pi_1(\mathcal{F})\rightarrow\mathbb{Z}_m$ be a finitely-injective epimorphism, and $h$ be a generator of $\mathbb{Z}_m$. Then there is an automorphism $\tau$ of $\mathcal{F}$ such that $\psi\circ\tau_*(\alpha_1)=h$, $\psi\circ\tau_*(\alpha_i)=1$ for $2\leq i\leq r$, $\psi\circ\tau_*(\beta_i)=1$ for $1\leq i\leq r$, and $\psi\circ\tau_*(\gamma_j)=\psi(\gamma_j)$ for $1\leq j\leq s$.
\end{lemma}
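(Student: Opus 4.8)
The plan is to realize $\tau$ as a composition of two standard kinds of self-homeomorphisms of $\mathcal F$, all fixing every singular point, and to track the effect of each on $\psi$. Since $\mathbb Z_m$ is abelian, $\psi$ factors through the abelianization $H_1(\mathcal F)=\pi_1(\mathcal F)^{\mathrm{ab}}$ and is recorded by the values $a_i=\psi(\alpha_i)$, $b_i=\psi(\beta_i)$, $c_j=\psi(\gamma_j)$ (we write $\mathbb Z_m$ additively for this computation). Moreover, if $\sigma$ is any homeomorphism of $\mathcal F$ fixing each singular point, then $\sigma_*(\gamma_j)$ is conjugate to $\gamma_j$, so $\psi\circ\sigma_*(\gamma_j)=c_j$ automatically, and inner automorphisms of $\pi_1(\mathcal F)$ are invisible to $\psi$. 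Hence it suffices to exhibit, among the transformations of the tuple $(a_1,b_1,\dots,a_r,b_r)\in(\mathbb Z_m)^{2r}$ that are induced by such homeomorphisms, one carrying it to $(h,0,\dots,0)$ while leaving every $c_j$ fixed. (We may assume $r\ge 1$; for $r=0$ the statement only requires $\psi\circ\tau_*(\gamma_j)=c_j$, so $\tau=\mathrm{id}$ works.)

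The two move families are: \emph{(i)} homeomorphisms supported in a closed regular neighbourhood $W$ of $\alpha_1\cup\beta_1\cup\cdots\cup\alpha_r\cup\beta_r$ --- a genus-$r$ subsurface with one boundary circle, disjoint from the singular points --- which, by the classical surjection of the mapping class group of such a subsurface onto $\mathrm{Sp}(2r,\mathbb Z)$ acting on $H_1(W)=\mathbb Z^{2r}$, act on $(a_1,\dots,b_r)$ through the full group $\mathrm{Sp}(2r,\mathbb Z)$ and, being the identity near $\partial W$, fix every $c_j$; and \emph{(ii)} for each $j$, the homeomorphism obtained by pushing the $j$-th singular point once around a loop that is freely homotopic to $\beta_1$ and (apart from passing through that singular point) is simple and disjoint from the other singular points. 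A point-push along a loop $\delta$ modifies a standard generator $x$ by inserting conjugates of $\gamma_j^{\pm1}$ according to the geometric intersection number of $x$ with $\delta$; since $\delta\simeq\beta_1$ meets $\alpha_1$ once and can be taken disjoint from $\beta_1$, from the other $\alpha_i,\beta_i$, and from the singular loops $\gamma_\ell$ with $\ell\ne j$, move \emph{(ii)} sends $a_1\mapsto a_1\pm c_j$ and fixes all the other values. Iterating \emph{(ii)} therefore lets us add to $a_1$ any element of the subgroup $L=\langle c_1,\dots,c_s\rangle\le\mathbb Z_m$, with no other change.

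With these moves in hand, the argument runs as follows. First, $\mathrm{Sp}(2r,\mathbb Z)\twoheadrightarrow\mathrm{Sp}(2r,\mathbb Z/m)$, and $\mathrm{Sp}(2r,\mathbb Z/m)$ carries an arbitrary vector to $(d,0,\dots,0)$ where $d$ generates the subgroup $K=\langle a_1,b_1,\dots,a_r,b_r\rangle$: lift to $\mathbb Z^{2r}$, use that $\mathrm{Sp}(2r,\mathbb Z)$ acts transitively on integral vectors with a prescribed greatest common divisor of entries to reduce to the form $(\mathrm{g.c.d.},0,\dots,0)$, and reduce mod $m$. Because $\psi$ is onto, $K+L=\mathbb Z_m$, i.e.\ a generator of $K$ and a generator of $L$ are jointly coprime to $m$; a Chinese-remainder argument then yields $l\in L$ with $d+l$ a unit of $\mathbb Z/m$ (for each prime $p\mid m$ only one residue of the relevant multiplier is excluded). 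Now apply move \emph{(i)} to reach $(d,0,\dots,0)$, move \emph{(ii)} repeatedly to reach $(d+l,0,\dots,0)$, and finally the diagonal symplectic move $\mathrm{diag}(u,u^{-1},1,\dots,1)\in\mathrm{Sp}(2r,\mathbb Z/m)$ with $u=h\,(d+l)^{-1}$ to reach $(h,0,\dots,0)$, the entries $b_1,a_2,b_2,\dots$ staying $0$ and every $c_j$ staying fixed throughout. The composite of the corresponding homeomorphisms (in the order making the tuple transformations compose correctly) is the desired $\tau$, and it can be taken orientation-preserving.

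The substantive point, and where care is needed, is the verification of the two move families at the level of $\pi_1(\mathcal F)$, with correct bookkeeping of basepoints and conjugating words: that homeomorphisms of the subsurface $W$ extend by the identity over $\mathcal F$ and realize all of $\mathrm{Sp}(2r,\mathbb Z)$ on homology, and that the point-push of a singular point has precisely the stated effect --- in particular that it fixes the $\psi$-values of $\beta_1$ and of the singular loops $\gamma_\ell$ with $\ell\ne j$. The conjugations that appear are irrelevant to the conclusion because $\mathbb Z_m$ is abelian, but they must be controlled to see that these maps really are automorphisms of $\pi_1(\mathcal F)$ induced by orbifold homeomorphisms; the remaining ingredients (surjectivity of $\mathrm{Sp}(2r,\mathbb Z)\to\mathrm{Sp}(2r,\mathbb Z/m)$, transitivity on integral vectors of fixed content, and the choice of $l$) are elementary.
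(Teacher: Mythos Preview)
Your proof is correct and uses the same toolkit as the paper---slides of singular points (your point-pushes) together with Dehn twists and handle slides on the genus-$r$ handle part (packaged in your write-up as the full $\mathrm{Sp}(2r,\mathbb Z)$-action on $H_1$)---and both arguments exploit that $\psi$ factors through $H_1(\mathcal F)$. In fact your Chinese-remainder step producing $l\in L$ with $d+l$ a unit in $\mathbb Z/m$ is a genuine sharpening: in the paper's $m>n$ case the assertion that, after the Dehn-twist and handle-slide reductions, $\psi(\beta_1)$ is already a generator of $\mathbb Z_m$ is not justified as stated (for instance $m=6$, $n=3$, $\langle a_i,b_i\rangle=\{0,3\}$, $\langle c_j\rangle=\{0,2,4\}$), and your argument supplies exactly the missing ingredient.
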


\begin{proof}
If $r=0$, then let $\tau$ be the identity. Below we will assume that $r\geq 1$. Note that we do not need to consider the base point, because $\mathbb{Z}_m$ is abelian. In fact, $\psi$ factors through $H_1(\mathcal F)$, so we often abuse the notation by regarding $\psi$ as a map $H_1(\mathcal F)\rightarrow\mathbb{Z}_m$. We will also use $\alpha_i$, $\beta_i$, $\gamma_j$, $1\leq i\leq r$, $1\leq j\leq s$ to denote the loops presenting them.

Let $n$ be the least common multiple of $n_1,\cdots,n_s$. Since $\psi$ is injective on finite subgroups of $\pi_1(\mathcal{F})$, the subgroup of $\mathbb{Z}_m$ generated by $\psi(\gamma_1),\cdots,\psi(\gamma_s)$ has order $n$. Below we will consider slides and Dehn twists on $\mathcal{F}$ along the loops $\alpha_i$ and $\beta_i$ for $1\leq i\leq r$. Figure \ref{fig:slides} shows the sketch of slides, where we omit the indices of the singular points. The left two pictures indicate the slide of singular points, and the right two pictures indicate the slide of handles.

\begin{figure}[h]
\includegraphics{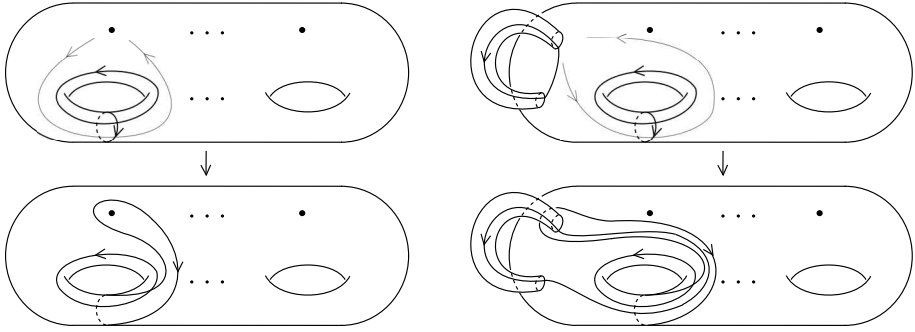}
\caption{Slide of singular points and slide of handles}\label{fig:slides}
\end{figure}

If $m=n$, then $\psi$ is surjective on $\langle\gamma_1,\cdots,\gamma_s\rangle$. Each singular point of $\mathcal{F}$ can slide along the loop $\alpha_i$ or $\beta_i$. Consider the singular point corresponding to $\gamma_k$. The slide of it along $\alpha_i$ will change $\psi(\beta_i)$ to $\psi(\beta_i\gamma_k^{\pm 1})$ and not affect the $\psi(\beta_j)$ with $j\neq i$, $\psi(\alpha_j)$ with $1\leq j\leq r$ and $\psi(\gamma_j)$ with $1\leq j\leq s$. Similarly, the slide of it along $\beta_i$ will only affect $\psi(\alpha_i)$. Hence we can change $\psi(\alpha_i)$ and $\psi(\beta_i)$ for $1\leq i\leq r$ to the required values, and $\tau$ can be a composition of slides of singular points.

If $m>n$, then we also need slides of handles and Dehn twists. The Dehn twist along $\alpha_i$ will change $\psi(\beta_i)$ to $\psi(\beta_i\alpha_i^{\pm 1})$ and not affect the $\psi(\beta_j)$ with $j\neq i$, $\psi(\alpha_j)$ with $1\leq j\leq r$ and $\psi(\gamma_j)$ with $1\leq j\leq s$. Similarly, the Dehn twist along $\beta_i$ will only affect $\psi(\alpha_i)$. Hence we can change $\psi(\alpha_i)$ for $1\leq i\leq r$ to $1$ by Dehn twists. Each handle corresponds to $\{\alpha_k,\beta_k\}$ can slide along the loop $\alpha_i$ where $i\neq k$. The slide will change $\psi(\beta_i)$ to $\psi(\beta_i\beta_k^{\pm 1})$ and not affect the $\psi(\beta_j)$ with $j\neq i$, $\psi(\alpha_j)$ with $j\neq k$ and $\psi(\gamma_j)$ with $1\leq j\leq s$. It will also not affect $\psi(\alpha_k)$ if each $\psi(\alpha_j)$ is $1$. So we can further change $\psi(\beta_i)$ for $2\leq i\leq r$ to $1$ by slides of handles. Now $\psi(\beta_1)$ is a generator of $\mathbb Z_m$ since $m>n$. We can change $\psi(\alpha_1)$ to the given generator $h$ by Dehn twists along $\beta_1$, then change $\psi(\beta_1)$ to $1$ by Dehn twists along $\alpha_1$.
Hence by the Dehn twists, the slides of handles, and the slides of singular points, we can change $\psi(\alpha_i)$ and $\psi(\beta_i)$ for $1\leq i\leq r$ to the required values, and $\tau$ can be a composition of slides of singular points, slides of handles and Dehn twists.
\end{proof}

\begin{proof}[Proof of Theorem \ref{thm:cyclic classify}]
We first show the ``only if'' part. Suppose that $G$ is a finite cyclic subgroup in $Aut(\Sigma_g)$ that is extendable over $S^3$. By the results in \cite{Pe}, we can assume that the monomorphism $\phi$ maps $G$ into $SO(4)$. Then we can obtain an embedding of a 2-orbifold $\mathcal{F}=\Sigma_g/G$ in the spherical 3-orbifold $\mathcal{O}=S^3/\phi(G)$.

We can identify $S^3$ with the set $\{(z_1,z_2)\in\mathbb{C}^2\mid |z_1|^2+|z_2|^2=1\}$. Let $h$ be a generator of $G$. Then we can assume that $\phi(h)$ has the form
\[\phi(h):(z_1,z_2)\mapsto(e^{i\theta_1}z_1,e^{i\theta_2}z_2).\]
Suppose that $G$ has order $n$, then there exist integers $l_1$ and $l_2$ such that $\theta_1=2\pi l_1/n$ and $\theta_2=2\pi l_2/n$. Note that the greatest common divisor of $l_1,l_2,n$ is $1$, otherwise the order of $G$ is smaller that $n$. Assume that neither $z_1$ nor $z_2$ is zero and there exists a positive integer $k$ such that $\phi(h)^k((z_1,z_2))=(z_1,z_2)$. Then $n\mid k$, and this means that the image of $(z_1,z_2)$ in $\mathcal{O}$ is a regular point. The image of $(z_1,0)$ (resp. $(0,z_2)$) in $\mathcal{O}$ has index $(l_1,n)$ (resp. $(l_2,n)$). Hence it is singular if and only if the corresponding greatest common divisor is bigger than one.

Let $p=(l_1,n)$ and $q=(l_2,n)$, then $(p,q)=1$. The singular set of $\mathcal{O}$ consists of at most two circles with indices $p$ and $q$. Clearly $n_1,\cdots,n_s\in\{p,q\}$, and each of $p$ and $q$ is valued even times, because $\mathcal{F}$ separates $\mathcal{O}$. Then the singular points of $\mathcal{F}$ in each singular circle can be partitioned into pairs such that the conditions (b) and (c) in Theorem \ref{thm:cyclic classify} are satisfied.

Then we show the ``if'' part. Suppose the orbifold $\mathcal{F}=\Sigma_g/G$ and the finitely-injective epimorphism $\psi$ satisfy the three conditions (a), (b), (c) in Theorem \ref{thm:cyclic classify}. By (a), each $\psi(\gamma_k)$ is either an element of order $p$ or an element of order $q$. By (b), all the $\psi(\gamma_k)$ with order $p$ (resp. $q$) have at most two possible values. Then by (c) and Lemma \ref{lem:cyc group}, the values of $\psi(\gamma_k)$ can be determined, up to automorphisms of $G$. Note that there exist automorphisms of $\mathcal{F}$ that exchange any two singular points in $\mathcal{F}$ with the same index. Then combined with Lemma \ref{lem:orb group}, up to conjugacy, we can assume that $\psi$ satisfies the following conditions.

(1) $\psi(\alpha_1)=h$, $\psi(\alpha_i)=1$ for $2\leq i\leq r$, $\psi(\beta_i)=1$ for $1\leq i\leq r$;

(2) $\psi(\gamma_j)=h^{mq}$ for $1\leq j\leq s_1$, $\psi(\gamma_j)=h^{-mq}$ for $s_1+1\leq j\leq 2s_1$, $\psi(\gamma_j)=h^{mp}$ for $2s_1+1\leq j\leq 2s_1+s_2$, $\psi(\gamma_j)=h^{-mp}$ for $2s_1+s_2+1\leq j\leq s$.

Here $h$ is a generator of $G$; $s_1, s_2\geq 0$ are two integers such that $2s_1+2s_2=s$. Since $\psi$ is finitely-injective, we have an integer $m$ such that $mpq$ equals the order of $G$, where $p=1$ (resp. $q=1$) if $s_1=0$ (resp. $s_2=0$).

Hence, when $\mathcal{F}$ is a torus with no singular points, up to conjugacy, the $G$-action on $\Sigma_1$ is determined by the order of $G$, and it is extendable over $S^3$; otherwise, up to conjugacy, the $G$-action on $\Sigma_g$ is completely determined by $\Sigma_g/G$, and we need to show that such a $G$-action is extendable over $S^3$.

For any 2-orbifold $\mathcal{F}$ and finitely-injective epimorphism $\psi:\pi_1(\mathcal{F})\rightarrow\mathbb{Z}_n$ that satisfy conditions (a), (b) and (c), we will construct an extendable cyclic action of order $n$ in Example \ref{exa:standard form} such that its corresponding orbifold is $\mathcal{F}$. By the ``only if'' part, this new action also satisfies the three conditions, and hence is conjugate to the $G$-action by the above arguments. Since in Example \ref{exa:standard form} all the surfaces in $S^3$ are Heegaard surfaces, we have the ``moreover'' part of Theorem \ref{thm:cyclic classify}.
\end{proof}

\begin{example}\label{exa:standard form}
Let $\mathcal{F}$ be the 2-orbifold having underlying space $\Sigma_r$, $2s_1$ singular points of index $p$, and $2s_2$ singular points of index $q$, where $s_1,s_2$ are nonnegative integers, $p,q$ are co-prime positive integers, and $s_1=0$ (resp. $s_2=0$) if and only if $p=1$ (resp. $q=1$). There exists a finitely-injective epimorphism $\psi:\pi_1(\mathcal{F})\rightarrow\mathbb{Z}_n$ if and only if one of the following conditions holds.

(1) $r=0$ and $n=pq$;

(2) $r\geq 1$ and $n=mpq$ for some positive integer $m$.

In each case, we will construct a $\mathbb{Z}_n$-action on a closed surface below such that its corresponding orbifold is homeomorphic to $\mathcal{F}$. The action is extendable over $S^3$ and its corresponding embedded surface is a Heegaard surface.

We identify $S^3$ with the set $\{(z_1,z_2)\in\mathbb{C}^2\mid |z_1|^2+|z_2|^2=1\}$. Let $\theta_m^q=2\pi/mq$ and $\theta_m^p=2\pi/mp$. Then let $h_m^{p,q}$ be the isometry of $S^3$ defined by
\[h_m^{p,q}:(z_1,z_2)\mapsto(e^{i\theta_m^q}z_1,e^{i\theta_m^p}z_2).\]
Clearly $h_m^{p,q}$ has order $mpq$. Let $T$ be the set $\{(z_1,z_2)\in S^3\mid |z_1|=|z_2|\}$. Then $T$ is a torus in $S^3$, which is invariant under the action of $\langle h_m^{p,q}\rangle$. Let $\epsilon$ be a sufficiently small positive number, for example $\epsilon<(100n(s_1+1)(s_2+1))^{-1}$.

\begin{figure}[h]
\includegraphics{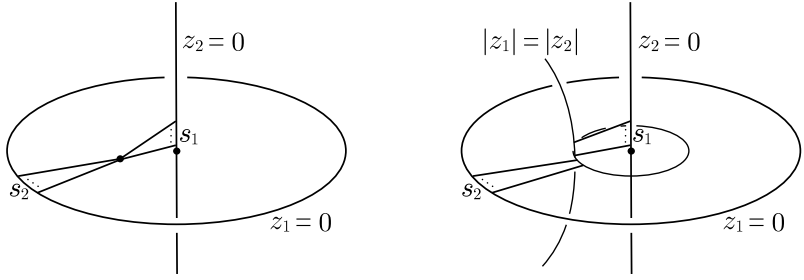}
\caption{Part of $\Gamma_{s_1,s_2}^{p,q}$ and $\Gamma_{s_1,s_2,m}^{p,q}$ in $S^3-\{(-1,0)\}$.}\label{fig:Sdform}
\end{figure}

In the case of (1), we connect the point $(\sqrt{2}/2,\sqrt{2}/2)$ to the points $(e^{ik\epsilon},0)$ and $(0,e^{il\epsilon})$ for $1\leq k\leq s_1$ and $1\leq l\leq s_2$ by the shortest geodesics in $S^3$. The left picture in Figure \ref{fig:Sdform} shows a sketch of the construction. The union of the orbits of these geodesics under the action of $\langle h_1^{p,q}\rangle$ forms a graph $\Gamma_{s_1,s_2}^{p,q}$. Then we can choose an invariant regular neighborhood of $\Gamma_{s_1,s_2}^{p,q}$. Denote its boundary by $\Sigma_{s_1,s_2}^{p,q}$. Then the orbifold $\Sigma_{s_1,s_2}^{p,q}/\langle h_1^{p,q}\rangle$ is homeomorphic to $\mathcal{F}$.

In the case of (2), we connect $(e^{ik\epsilon}\sqrt{2}/2,\sqrt{2}/2)$ to $(e^{ik\epsilon},0)$ for $1\leq k\leq s_1$, and connect $(\sqrt{2}/2,e^{il\epsilon}\sqrt{2}/2)$ to $(0,e^{il\epsilon})$ for $1\leq l\leq s_2$, by the shortest geodesics in $S^3$. The right picture in Figure \ref{fig:Sdform} shows a sketch of the construction. The union of the orbits of these geodesics under the action of $\langle h_m^{p,q}\rangle$ forms a graph $\Gamma_{s_1,s_2,m}^{p,q}$, which is not connected in general. We choose sufficiently small disjoint spheres centred at the vertices of degree bigger than one. For each edge of the graph we can make a tube along it connecting the small spheres to the torus $T$. Then we can obtain a closed surface which is invariant under the action of $\langle h_m^{p,q}\rangle$. At each point in the orbit of $(\sqrt{2}/2,\sqrt{2}/2)$ we can add $r-1$ local handles to the surface equivariantly. Denote the result by $\Sigma_{s_1,s_2,m}^{p,q,r}$. Then $\Sigma_{s_1,s_2,m}^{p,q,r}/\langle h_m^{p,q}\rangle$ is homeomorphic to $\mathcal{F}$.

Note that the surfaces $\Sigma_{s_1,s_2}^{p,q}$ and $\Sigma_{s_1,s_2,m}^{p,q,r}$ are all Heegaard surfaces. Hence the $\langle h_1^{p,q}\rangle$-action on $\Sigma_{s_1,s_2}^{p,q}$ and the $\langle h_m^{p,q}\rangle$-action on $\Sigma_{s_1,s_2,m}^{p,q,r}$ give all extendable finite cyclic actions on $\Sigma_g$ stated in Theorem \ref{thm:cyclic classify}. By the Riemann-Hurwitz formula,
\[2-2g=n(2-2r-2s_1(1-\frac{1}{p})-2s_2(1-\frac{1}{q})).\]
Since $(p,q)=1$ and $pq\mid n$, for a given $g$ we can find all solutions $(n,p,q,r,s_1,s_2)$ by enumeration. Especially, when $s_1=s_2=0$ we obtain the standard forms of the free finite cyclic actions on $\Sigma_g$, which correspond to the factors of $g-1$.
\end{example}

\begin{remark}\label{rem:equivalent}
Note that for a finite cyclic subgroup $G$ in $Aut(\Sigma_g)$ that is extendable over $S^3$, the generators of $G$ are not conjugate to each other in general. Let $\psi$ be the finitely-injective epimorphism. For two generators $h_1$ and $h_2$ in $G$, let $\eta$ be the automorphism of $G$ such that $\eta(h_1)=h_2$. Then $h_1$ and $h_2$ are conjugate to each other if and only if $\eta\circ\psi(\gamma_k)=\psi(\gamma_k)$ or $\eta\circ\psi(\gamma_k)=\psi(\gamma_k)^{-1}$ for $1\leq k\leq s$. Hence the classification of orientation-preserving periodic automorphisms of $\Sigma_g$ that are extendable over $S^3$ needs a little more work.
\end{remark}

As the end of the paper, we give some examples and questions as illustrations and supplements to our theorems.

\begin{example}\label{exa:figure eight}
Let $Z$ be the complement of an open regular neighborhood of the figure eight knot in $S^3$. Then $\partial Z$ is a torus, which admits a translation of arbitrarily large order. On the other hand, $Z-\partial Z$ admits a complete hyperbolic structure with finite volume (see \cite{Th}). The orders of periodic automorphisms of it are bounded. Clearly a translation of $\partial Z$ is extendable over $Z$. But if its order is large enough, then the group generated by it is not extendable over $Z$.

\begin{figure}[h]
\includegraphics{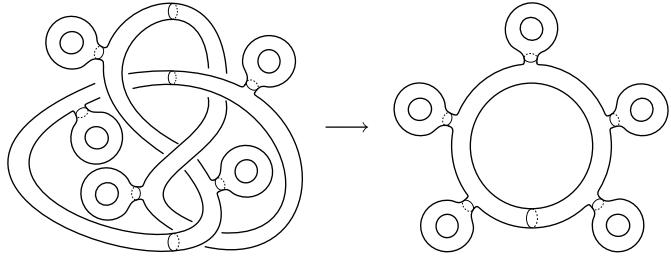}
\caption{Replace the complement by a simple manifold}\label{fig:figure8}
\end{figure}

Let $G$ be such a group. Then under the above embedding of $\partial Z$ in $S^3$, $G$ is not extendable. However, we can replace $Z$ by a solid torus such that the result manifold is still a $S^3$ and $G$ is extendable over it. Namely there is another embedding of $\partial Z$ in $S^3$ such that $G$ is extendable over $S^3$.

Examples about surfaces of high genera can be obtained by adding handles onto the closed regular neighborhood of the figure eight knot, as in Figure \ref{fig:figure8}. The surface constructed in this way is compressible in each sides.
\end{example}

\begin{example}\label{exa:non extendable}
We give two periodic automorphisms of the handlebody of genus four, which are not extendable over $S^3$. However, it is not so easy to prove this fact without Theorem \ref{thm:sphere case} and Theorem \ref{thm:cyclic classify}.

\begin{figure}[h]
\includegraphics{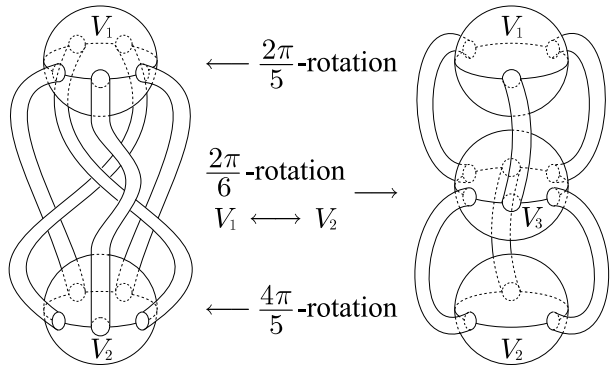}
\caption{Actions on the handlebody of genus four}\label{fig:handlebody}
\end{figure}

We first consider the handlebody with two 0-handles $V_1,V_2$ and five 1-handles $E_i$ for $1\leq i\leq 5$. We will construct a periodic automorphism $f$ of order five. It is a $2\pi/5$-rotation on $V_1$, is a $4\pi/5$-rotation on $V_2$, and permutes the five 1-handles such that $f(E_i)=E_{i+1}$ for $1\leq i\leq 4$. Then we can equivariantly attach the five 1-handles onto $V_1$ and $V_2$ such that each $E_i$ is adjacent to both of $V_1$ and $V_2$. See the left picture in Figure \ref{fig:handlebody}. This gives an order five periodic automorphism of the handlebody of genus four. Intuitively, it is not extendable over $S^3$, because the ``rotation speeds'' of the two 0-handles are different. In view of our theorems, the condition (b) in Theorem \ref{thm:cyclic classify} is not satisfied.

Then consider the handlebody with three 0-handles $V_1,V_2,V_3$ and six 1-handles $E_i$ for $1\leq i\leq 6$. We will construct a periodic automorphism $f$ of order six. It is a $2\pi/6$-rotation on $V_3$ and it exchanges $V_1$ and $V_2$. Its square preserves $V_1$ and $V_2$, and is a $2\pi/3$-rotation on them. It permutes the six 1-handles such that $f(E_i)=E_{i+1}$ for $1\leq i\leq 5$. Then we can equivariantly attach the six 1-handles onto $V_1$, $V_2$, $V_3$ such that each of $E_1$, $E_3$, $E_5$ is adjacent to both of $V_1$ and $V_3$, and each of $E_2$, $E_4$, $E_6$ is adjacent to both of $V_2$ and $V_3$. See the right picture in Figure \ref{fig:handlebody}. This gives an order six periodic automorphism of the handlebody of genus four. Intuitively, it is not extendable over $S^3$, because $V_1$ and $V_2$ can not be exchanged. In the view of our theorems, the condition (a) in Theorem \ref{thm:cyclic classify} is not satisfied.
\end{example}

\begin{example}\label{exa:knotted surface}
By the result in \cite{WWZZ2}, the maximum order of finite subgroups in $Aut(\Sigma_g)$ that are extendable over $S^3$ is $6(g-1)$ when $g$ is $21$ or $481$. Moreover, the corresponding embedded surfaces of such groups cannot be Heegaard surfaces. Hence, the ``moreover'' part of Theorem \ref{thm:cyclic classify} does not hold for general finite group actions. Below we will give an explicit example about $\Sigma_{21}$.

Consider two spheres in $\mathbb{R}^3$ centered at the origin with radii $1/2$ and $2$. Project a regular dodecahedron centered at the origin onto the spheres, and denote the two images by $P_1$ and $P_2$, where $P_2$ is the larger one. Clearly the orientation-preserving isometries of the dodecahedron preserves $P_1\cup P_2$. Consider the composition of the inversion about the unit sphere and a refection about a plane containing the origin. We can choose the plane such that the composition preserves $P_1\cup P_2$. Then all these elements preserving $P_1\cup P_2$ generate a group of order $120$, denoted by $G$.

Clearly $G$ can act on $S^3$. Let $v$ and $w$ be adjacent vertices in the dodecahedron, and let $v_1$, $w_1$ and $v_2$, $w_2$ be their corresponding vertices in $P_1$ and $P_2$. Then we can choose an arc connecting $v_1$ and $w_2$ such that its images under the action of $G$ only meet at vertices of $P_1$ and $P_2$. The union of these image arcs is a graph with genus $21$. It has a regular neighborhood $N$ that is preserved by the action of $G$. Then $G$ also preserves $\partial N$ which is homeomorphic to $\Sigma_{21}$.
\end{example}

\begin{question}
If we admit orientation-reversing automorphisms (of $\Sigma_g$ or $S^3$), how to classify the periodic automorphisms of $\Sigma_g$ that is extendable over $S^3$? can the corresponding embedded surface always be a Heegaard surface?
\end{question}

\bibliographystyle{amsalpha}

\end{document}